\definecolor{codegreen}{rgb}{0,0.6,0}
\definecolor{codegray}{rgb}{0.5,0.5,0.5}
\definecolor{codepurple}{rgb}{0.58,0,0.82}
\definecolor{backcolour}{rgb}{0.95,0.95,0.92}
\DeclareSymbolFont{bbold}{U}{bbold}{m}{n}
\DeclareSymbolFontAlphabet{\mathbbold}{bbold}
\lstdefinestyle{mystyle}{
  backgroundcolor=\color{backcolour}, commentstyle=\color{codegreen},
  keywordstyle=\color{magenta},
  numberstyle=\tiny\color{codegray},
  stringstyle=\color{codepurple},
  basicstyle=\ttfamily\footnotesize,
  breakatwhitespace=false,         
  breaklines=true,                 
  captionpos=b,                    
  keepspaces=true,                 
  numbers=left,                    
  numbersep=5pt,                  
  showspaces=false,                
  showstringspaces=false,
  showtabs=false,                  
  tabsize=2
}
\title{Asymptotic Distribution of Residues in Pascal's Triangle mod $p$}
\author{Connor Lane}
\newtheorem{theorem}{Theorem}[section]
\newtheorem*{proposition}{Proposition}
\newtheorem{conjecture}[theorem]{Conjecture}
\newtheorem{lemma}[theorem]{Lemma}
\newtheorem{defn}[theorem]{Definition}
\newtheorem{corollary}[theorem]{Corollary}
\theoremstyle{remark}
\newtheorem{remark}{Remark}[section]
\newcommand{\pc}{\phi_\chi}
\newcommand{\rc}{\rho_\chi}
\newcommand{\tc}{\theta_\chi}
\begin{document}
\maketitle

\begin{abstract}
    Fix a prime $p$ and define $T_p(n)$ to be the number of nonzero residues in the $n$th row of pascal's triangle mod $p$, and define $\phi_p(n)$ to be the number of nonzero residues in the first $n$ rows of pascal's triangle mod $p$. We generalize these to sequences $T_\chi(n)$ and $\phi_\chi(n)$ for a Dirichlet character $\chi$ of modulus $p$. We prove many properties of these sequences that generalize those of $T_p(n)$ and $\phi_p(n)$. Define $A_n(r)$ to be the number of occurrences of $r$ in the first $n$ rows of Pascal's triangle mod $p$. Guy Barat and Peter Grabner showed that for all primes $p$ and nonzero residues $r$, $A_n(r)\sim \frac{1}{p-1}\phi_p(n)$. We provide an alternative proof of this fact that yields explicit bounds on the error term. We also discuss the distribution of $A_p(r)$.
\end{abstract}

\section{Introduction}

The problem of the structure of Pascal's triangle mod $p$ has a long history, starting with the following theorem of Lucas \cite{10.2307/2304500}. Suppose $n$ has $p$-ary expansion $\overline{n_kn_{k-1}\hdots n_0}$ and $m$ has $p$-ary expansion $\overline{m_km_{k-1}\hdots m_0}$. Then

\begin{equation*}
    \binom{n}{m}\equiv \prod_{j=0}^k \binom{n_j}{m_j} \mod{p}.
\end{equation*}

This reduces computation of $\binom{n}{m}\mod{p}$ to computing $\binom{n_j}{m_j} \mod{p}$, where $n_j,m_j< p$. Motivated by this, we define

\begin{defn}
    The fundamental domain of pascal's triangle mod $p$ is the first $p$ rows of the triangle.
\end{defn}

Next, we introduce some notation:

\begin{enumerate}
    \item $p$ is a fixed prime unless otherwise specified. Function definitions are always defined in terms of the choice of $p$, even if not explicitly specified.
    \item $T_p(n)$ is the number of nonzero residues in the $n$th row of Pascal's triangle mod $p$.
    \item $\phi_p(n)$ is the number of nonzero residues in the first $n$ rows of Pascal's triangle mod $p$. 
    \item $a_n(r)$ is the number of occurrences of $r$ in the $n$th row of Pascal's triangle mod $p$, where the triangle is understood to start at the zeroeth row.
    \item $A_n(r)=\sum_{u=0}^{n-1}a_n(r)$ is the number of occurences of $r$ in the first $n$ rows of Pascal's triangle mod $p$.
    \item $\chi$ is always a Dirichlet character with modulus $p$.
\end{enumerate}

In 2001, \cite{2001paper} proved the following theorem

\begin{theorem}\label{the:1.2}
    Suppose $p$ is a prime and $r$ is a nonzero residue mod $p$. Then as $n$ goes to infinity,
    \begin{equation*}
        A_n(r)\sim \frac{\phi_p(n)}{p-1}.
    \end{equation*}
\end{theorem}

In fact, they proved a generalization to prime powers and the $p$th-power free part of binomial coefficients. However, we focus on this special case in our paper, and using alternative methods we prove the following asymptotic bounds on $A_n(r)$.

\begin{theorem}\label{the:1.3}
    Let $p$ be a prime and $r$ a nonzero residue mod $p$. Let $\vartheta$ be defined as in section \ref{sec:conc}. Then
    \begin{equation*}
        A_n(r)=\frac{\phi_p(n)}{p-1}+O(n^{\vartheta}).
    \end{equation*}
    Further, the constant implied by the big $O$ is explicitly computable.
\end{theorem}

In section \ref{sec:2}, we introduce two sequences determined by a Dirichlet character $\chi$, $T_{\chi}(n)$ and $\phi_\chi(n)$, which roughly correspond to $a_n(r)$ and $A_n(r)$, however, they obey some very nice identities. Then in section \ref{sec:props} we prove some asymptotic bounds on the behavior of $\phi_\chi(n)$ based on behavior in the fundamental domain. Then, in section \ref{sec:fd} we analyze the fundamental domain using a mixture of heuristic methods and concrete bounds. Finally in section \ref{sec:conc} we combine the results of section \ref{sec:props} and section \ref{sec:fd} to prove theorem \ref{the:1.3}, and we discuss some conjectures.

\section{The functions $T_\chi(n)$ and $\pc(n)$}\label{sec:2}

For a fixed prime $p$ let $a_n(r)$ be the number of occurrences of $r$ in the $n$th row of Pascal's triangle mod $p$. We define $T_\chi(n)$

\begin{equation*}
    T_\chi(n)=\sum_{j=0}^n \chi\!\left(\!\binom{n}{j}\!\right)=\sum_{i=1}^{p-1} \chi(i)a_n(i)
\end{equation*}

\begin{remark}
    In the language of \cite{2001paper}, this is a $1$-block multiplicative function. Many of the theorems we state about $T_\chi$ follow from general theorems about block-multiplicative functions, however we include their proof for completeness sake. 
\end{remark}

\begin{proposition}
    Let $n$ have $p$-ary expansion $n=\overline{n_k\hdots n_0}$, then

    \begin{equation}\label{eq:1}
        T_\chi(n)=\prod_{j=0}^k T_{\chi}(n_j)
    \end{equation}
\end{proposition}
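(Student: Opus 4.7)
The plan is to reduce the statement to a direct application of Lucas's theorem combined with the complete multiplicativity of the Dirichlet character $\chi$, with the only care needed being the handling of the convention $\chi(0)=0$.

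First, I would rewrite the definition of $T_\chi(n)$ by extending the sum from $m=0$ to $m=p^{k+1}-1$: for any $m$ in that range with $m>n$, Lucas's theorem implies $\binom{n}{m}\equiv 0\pmod p$, so $\chi\!\left(\binom{n}{m}\right)=0$ and the extension is harmless. Then I would index $m$ by its $p$-ary digits $(m_0,\dots,m_k)\in\{0,\dots,p-1\}^{k+1}$, so that
\begin{equation*}
    T_\chi(n)=\sum_{m_0=0}^{p-1}\cdots\sum_{m_k=0}^{p-1}\chi\!\left(\binom{n}{m}\right).
\end{equation*}

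Next, applying Lucas's theorem gives $\binom{n}{m}\equiv\prod_{j=0}^k\binom{n_j}{m_j}\pmod p$, and since $\chi$ is completely multiplicative, the character of a product is the product of characters:
\begin{equation*}
    \chi\!\left(\binom{n}{m}\right)=\prod_{j=0}^k\chi\!\left(\binom{n_j}{m_j}\right).
\end{equation*}
Substituting this into the iterated sum and interchanging the sum with the product (the summand factors over the independent indices $m_0,\dots,m_k$) yields
\begin{equation*}
    T_\chi(n)=\prod_{j=0}^k\sum_{m_j=0}^{p-1}\chi\!\left(\binom{n_j}{m_j}\right).
\end{equation*}

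Finally, I would identify each inner sum with $T_\chi(n_j)$: for $m_j>n_j$ the binomial $\binom{n_j}{m_j}$ vanishes and contributes nothing, so $\sum_{m_j=0}^{p-1}\chi\!\left(\binom{n_j}{m_j}\right)=\sum_{m_j=0}^{n_j}\chi\!\left(\binom{n_j}{m_j}\right)=T_\chi(n_j)$, giving the claimed identity. There is no real obstacle here; the only subtle point is consistently using $\chi(0)=0$ to justify both the extension of the outer sum and the truncation back to $m_j\le n_j$ at the end.
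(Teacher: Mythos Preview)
Your proof is correct, but it proceeds by a different route from the paper's. The paper does not argue directly from Lucas's theorem; instead it invokes a result of Garfield--Wilf which packages the row-count data into the polynomial $R_n(x)=\sum_{i=0}^{p-2} x^i a_n(g^i)$ (with $g$ a generator of $(\mathbb{Z}/p\mathbb{Z})^\times$) and asserts the congruence $R_n(x)\equiv\prod_{j=0}^k R_{n_j}(x)\pmod{x^{p-1}-1}$. Evaluating at $x=\chi(g)$, a $(p-1)$st root of unity, kills the modulus and yields $T_\chi(n)=\prod_j T_\chi(n_j)$. Your argument is more elementary and self-contained: it recovers the same factorisation directly from Lucas and the complete multiplicativity of $\chi$, without passing through the cited polynomial identity or needing a choice of generator. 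The polynomial route has the advantage that it simultaneously encodes the factorisation for all characters at once (and indeed the Garfield--Wilf identity is slightly stronger information than the proposition), but for the specific statement at hand your direct approach is cleaner and avoids an external reference. One minor remark: in your extension step you appeal to Lucas to see that $\binom{n}{m}\equiv 0\pmod p$ for $n<m<p^{k+1}$, but of course $\binom{n}{m}=0$ as an integer there, so no theorem is needed.
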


\begin{proof}
    This is a restatement of the primary result of \cite{GARFIELD19921}. If $g$ is a generator of $(\mathbb{Z}/p\mathbb{Z})^\times$, and $a_n(r)$ is the number of occurences of $r$ in the $n$th row of Pascal's triangle mod $p$. We define the polynomial $R_n(x)$, where 
    \begin{equation*}
        R_n(x)=\sum_{i=0}^{p-2} x^i a_n(g^i).
    \end{equation*}

    then, using our notation, they showed
    \begin{equation*}
        R_n(x)\equiv \prod_{j=0}^{k} R_{n_k}(x) \mod x^{p-1}-1
    \end{equation*}

    We know $\chi(g^n)=\chi(g)^n$, so it follows that $R_n(\chi(g))=T_{\chi}(n)$, and since $\chi(g)^{p-1}-1=0$, the result follows.
\end{proof}

By considering partial sums of $T_\chi(n)$, we define $\phi_\chi(n)$:

\begin{equation*}
    \phi_\chi(n)=\sum_{u=0}^{n-1} T_\chi(u).
\end{equation*}

We remark that if $\chi_0$ is the principal character mod $p$, then $\phi_{\chi_0}(n)=\phi_p(n)$, which has been heavily studied in the literature. It equals the number of nonzero residues in the first $n$ rows of Pascal's triangle mod $p$. Among other things, it has been shown that if $\theta=\log_p(\phi_p(p))$, then $\alpha=\limsup(\tfrac{\phi_p(n)}{n^\theta})$ and $\beta=\liminf(\tfrac{\phi_p(n)}{n^\theta})$ both exist, with $\alpha=1$ and $1>\beta>0.5$ \cite{10.2307/2041936, BradWilson1998}. These theorems are made possible by certain recursive formulas for $\phi_p(n)$, the following lemma generalizes these fractal properties to arbitrary $\phi_\chi(n)$.
\newpage
\begin{lemma}\label{lem:2.1}
    \phantom \\
    \begin{enumerate}
        \item For all nonegative integers $m,k$ we have $\phi_{\chi}(mp^k)=\phi_\chi(m)\phi_\chi(p^k)$
        \item Furthermore, for all nonnegative $n<p^k$, we have $\phi_\chi(mp^k+n)=\phi_\chi(mp^k)+T_\chi(m)\phi_\chi(n)$
    \end{enumerate}
\end{lemma}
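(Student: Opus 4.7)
The plan is to reduce both parts to the multiplicativity of $T_\chi$ over $p$-ary digits, namely equation (\ref{eq:1}), by exploiting the key fact that whenever $u = vp^k + w$ with $0 \le w < p^k$, the base-$p$ expansion of $u$ is the concatenation of the expansions of $v$ (upper digits) and $w$ (lower $k$ digits), so $T_\chi(u) = T_\chi(v)\,T_\chi(w)$.

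For part 1, I would start from the definition $\phi_\chi(mp^k)=\sum_{u=0}^{mp^k-1}T_\chi(u)$ and partition the range of summation by writing each $u$ uniquely as $u = vp^k + w$ with $0 \le v < m$ and $0 \le w < p^k$. The multiplicativity above lets the double sum factor:
\begin{equation*}
    \phi_\chi(mp^k) = \sum_{v=0}^{m-1}\sum_{w=0}^{p^k-1} T_\chi(v)T_\chi(w) = \left(\sum_{v=0}^{m-1}T_\chi(v)\right)\left(\sum_{w=0}^{p^k-1}T_\chi(w)\right) = \phi_\chi(m)\phi_\chi(p^k).
\end{equation*}

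For part 2, I would split the sum at the index $mp^k$:
\begin{equation*}
    \phi_\chi(mp^k+n) = \sum_{u=0}^{mp^k-1}T_\chi(u) + \sum_{u=mp^k}^{mp^k+n-1}T_\chi(u) = \phi_\chi(mp^k) + \sum_{w=0}^{n-1}T_\chi(mp^k+w).
\end{equation*}
Since $n < p^k$ forces each $w$ in the second sum to satisfy $w < p^k$, the concatenation argument again gives $T_\chi(mp^k+w)=T_\chi(m)T_\chi(w)$, and factoring $T_\chi(m)$ out of the sum yields $T_\chi(m)\phi_\chi(n)$, completing the identity.

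There is not really a substantive obstacle beyond setting up the bijection between $\{0,\dots,mp^k-1\}$ and $\{0,\dots,m-1\}\times\{0,\dots,p^k-1\}$ given by $u \mapsto (v,w)$, and being careful that the bound $w < p^k$ (which needs $n < p^k$ in part 2) is what makes the digit expansions split without carries. Everything else is bookkeeping once equation (\ref{eq:1}) is in hand.
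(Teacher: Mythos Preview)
Your proof is correct and mirrors the paper's argument almost exactly: both parts are proved by writing $u = vp^k + w$ with $0 \le w < p^k$, invoking equation~(\ref{eq:1}) via the digit-concatenation observation to get $T_\chi(u) = T_\chi(v)T_\chi(w)$, and then factoring the resulting double sum (for part~1) or splitting at $mp^k$ and factoring out $T_\chi(m)$ (for part~2). The only differences are cosmetic (variable names $v,w$ versus the paper's $u_1,u_2$).
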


\begin{proof}
    First, we show 1. Let $m,k\in \mathbb{N}$, then we have

    \begin{align*}
        \phi_\chi(mp^k)=&\sum_{u=0}^{mp^k-1} T_\chi(u) \\
        =&\sum_{u_1=0}^{m-1} \sum_{u_2=0}^{p^k-1} T_\chi(u_1p^k+u_2) \\
        =&\sum_{u_1=0}^{m-1} \sum_{u_2=0}^{p^k-1} T_\chi(u_1)T_\chi(u_2)\\
        =&\sum_{u_1=0}^{m-1} T_\chi(u_1)\sum_{u_2=0}^{p^k-1} T_\chi(u_2)\\
        =& \phi_\chi(m)\phi_\chi(p^k)
    \end{align*}

Note that in the third line we use the fact that the last $k$ digits of $u_1p^k+u_2$ are exactly the digits of $u_2$ and all other digits are the digits of $u_1$. This completes the proof of part 1. The proof of part 2 is similar

\begin{align*}
    \phi_\chi(mp^k+n)=& \sum_{u=0}^{mp^{k}+n-1} T_\chi(u) \\
    =& \sum_{u=0}^{mp^{k}-1}T_\chi(u)+\sum_{u=0}^{n-1} T_\chi(mp^k+u) \\
    =& \phi_\chi(mp^k)+T_\chi(m)\sum_{u=0}^{n-1} T_\chi(u) \\
    =& \phi_\chi(mp^k)+T_\chi(m)\phi_\chi(n).
\end{align*}

Where we use the fact that the last $k$ digits of $mp^k+u$ are exactly the digits of $u$. This concludes the proof of part 2.
\end{proof}

Next, we let $A_n(r)$ be the number of occurrences of the residue $r$ in the first $n$ rows of pascal's triangle mod $p$. We note that $\phi_\chi(n)$ can be written in terms of $A_n(r)$:

\begin{equation*}
    \phi_\chi(n)=\sum_{u=0}^{n-1} T_\chi(n)=\sum_{u=0}^{n-1}\sum_{r=1}^{p-1} \chi(r)a_u(r)=\sum_{r=1}^{p-1} \sum_{u=0}^{n-1}\chi(r)a_u(r)=\sum_{p=1}^{n-1}\chi(r)A_u(r).
\end{equation*}

More interestingly, we can actually compute $A_n(r)$ in terms of $\phi_\chi(n)$.

\begin{lemma}\label{lem:explicitformula}
    Let $n$ be a nonnegative integer; then,
    \begin{equation*}
        A_n(r)=\frac{1}{p-1}\sum_{\chi} \overline{\chi}(r)\phi_\chi(n).
    \end{equation*}
\end{lemma}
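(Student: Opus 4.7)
The plan is to invert the linear relation $\phi_\chi(n) = \sum_{s=1}^{p-1} \chi(s) A_n(s)$ that was established just before the lemma statement, using orthogonality of Dirichlet characters modulo $p$. The underlying idea is that the map sending the tuple $(A_n(1),\ldots,A_n(p-1))$ to the tuple $(\phi_\chi(n))_\chi$ is essentially the discrete Fourier transform on the group $(\mathbb{Z}/p\mathbb{Z})^\times$, so it should be invertible by the usual Fourier inversion formula.

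Concretely, I would start by multiplying the identity $\phi_\chi(n) = \sum_{s=1}^{p-1} \chi(s) A_n(s)$ by $\overline{\chi}(r)$ for a fixed nonzero residue $r$, and then sum over all Dirichlet characters $\chi$ modulo $p$. Swapping the order of summation gives
\begin{equation*}
    \sum_\chi \overline{\chi}(r) \phi_\chi(n) = \sum_{s=1}^{p-1} A_n(s) \sum_\chi \overline{\chi}(r)\chi(s).
\end{equation*}
The inner sum is a standard orthogonality sum: since both $r,s \in (\mathbb{Z}/p\mathbb{Z})^\times$, we have $\sum_\chi \overline{\chi}(r)\chi(s) = \sum_\chi \chi(s r^{-1})$, which equals $p-1$ when $s \equiv r \pmod p$ and $0$ otherwise. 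Substituting this in collapses the sum on the right to $(p-1)A_n(r)$, and dividing by $p-1$ yields the claimed formula.

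The only real subtlety is being clean about which characters are being summed: the sum must be over all $p-1$ Dirichlet characters mod $p$ (including the principal character), so that orthogonality holds with the full factor $p-1$. As long as $r$ is a nonzero residue, $r^{-1}$ makes sense in $(\mathbb{Z}/p\mathbb{Z})^\times$ and the orthogonality relation applies directly. There is no substantive obstacle here; the proof is essentially a one-line Fourier inversion once the relation $\phi_\chi(n) = \sum_s \chi(s) A_n(s)$ is in hand.
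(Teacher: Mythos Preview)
Your proof is correct and follows essentially the same route as the paper: both start from $\phi_\chi(n)=\sum_{t}\chi(t)A_n(t)$, multiply by $\overline{\chi}(r)$, sum over all characters mod $p$, swap the order of summation, and then apply orthogonality of characters to collapse the double sum to $(p-1)A_n(r)$. The only cosmetic difference is that the paper writes the inverse of $r$ as an explicit integer $s$ with $sr\equiv 1\pmod p$ (so that $\overline{\chi}(r)=\chi(s)$), whereas you write $r^{-1}$ directly; this does not affect the argument.
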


\begin{proof}
    Let $s$ be an integer such that $sr\equiv 1 \mod p$. Then $\overline{\chi}(r)=\chi(s)$ and
    \begin{align*}
        \frac{1}{p-1}\sum_{\chi}\overline{\chi}(r)\phi_\chi(n)=&\frac{1}{p-1}\sum_\chi \sum_{t=1}^{p-1} \chi(s)\chi(t)A_n(t)=\frac{1}{p-1}\sum_{t=1}^{p-1}\sum_{\chi}\chi(st)A_n(t)\\
        =&\frac{1}{p-1}\sum_{t=1}^{p-1}\begin{cases}
            (p-1)A_n(t) & \text{if }st\equiv 1\mod p \\
            0 & \text{otherwise}
        \end{cases}\\
        =&A_n(r).
    \end{align*}

    Where we use orthogonality of characters to simplify the sum over Dirichlet characters mod $p$.
\end{proof}

This makes $A_n(r)$ significantly easier to study, as it is reduced to studying $\phi_\chi(n)$, a sequence that is much more well-behaved. 

\section{Properties of $\phi_\chi(n)$}\label{sec:props}

We begin by fixing a prime $p$ and character $\chi$ of modulus $p$. We define $\tc=\log_p(\phi_\chi(p))$, and we take the principal branch of the logarithm.

Next, we define a certain technical condition that the theorems of this section rely upon. Further discussion of this condition can be found in section \ref{sec:fd}.

\begin{defn} \label{rr}
    A character $\chi$ is called row-regular if for all $0\leq b<p$, we have $|T_\chi(b)|<|\phi_\chi(p)|$.
\end{defn}

Under a row-regularity assumption, the behavior of $\pc(n)$ is actually quite predictable, and is the focus of theorems \ref{the:3.2} and \ref{the:3.4}. Both of these follow from theorem 1 of \cite{2001paper}, which works more generally with $l$-block multiplicative functions. These proofs are nonetheless included in our paper so that we have a complete proof of theorem \ref{the:1.3}.

\begin{theorem} \label{the:3.2}
    Fix a row-regular character $\chi$. Then we have $|\phi_\chi(n)|=O(n^{\tc})$. Moreover, if we define $\alpha=\limsup(|\phi_\chi(n)/n^{\tc})|$, then $\alpha$ exists and is greater than or equal to $1$.
\end{theorem}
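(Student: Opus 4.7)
The plan is to unfold Lemma \ref{lem:2.1} along the base-$p$ expansion of $n$ to obtain a closed form for $\phi_\chi(n)$, and then use the row-regularity hypothesis to bound the resulting sum geometrically.

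First I would establish by induction on $k$ the identity
\begin{equation*}
    \phi_\chi(n) \;=\; \sum_{j=0}^{k} \phi_\chi(n_j)\,\phi_\chi(p)^{j}\,\prod_{i=j+1}^{k} T_\chi(n_i),
\end{equation*}
where $n=\overline{n_k n_{k-1}\cdots n_0}$ is the base-$p$ expansion. The inductive step is exactly part 2 of Lemma \ref{lem:2.1}: writing $n = n_k p^k + n'$ with $n' < p^k$, we get $\phi_\chi(n) = \phi_\chi(n_k p^k) + T_\chi(n_k)\phi_\chi(n')$, and part 1 collapses $\phi_\chi(n_k p^k) = \phi_\chi(n_k)\phi_\chi(p)^k$. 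Note also that by definition $\phi_\chi(p)^k = p^{k\theta_\chi}$, so these factors are exactly the ``expected'' sizes.

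Next, for the $O$-bound, set $M = \max_{0\le b<p}|\phi_\chi(b)|$ and $t = \max_{0\le b<p}|T_\chi(b)|$. Row-regularity gives $t < |\phi_\chi(p)|$; in particular, since $|T_\chi(0)|=1$ we also get $|\phi_\chi(p)| > 1$, so $\operatorname{Re}(\theta_\chi) > 0$. Applying the triangle inequality to the identity above,
\begin{equation*}
    |\phi_\chi(n)| \;\le\; M\sum_{j=0}^{k} |\phi_\chi(p)|^{j}\, t^{\,k-j} \;=\; M\,|\phi_\chi(p)|^{k}\sum_{i=0}^{k}\!\left(\tfrac{t}{|\phi_\chi(p)|}\right)^{i} \;\le\; \frac{M}{1-t/|\phi_\chi(p)|}\,|\phi_\chi(p)|^{k}.
\end{equation*}
Since $p^k \le n$ and $|\phi_\chi(p)|^k = p^{k\operatorname{Re}(\theta_\chi)} \le n^{\operatorname{Re}(\theta_\chi)} = |n^{\theta_\chi}|$, this yields $|\phi_\chi(n)| = O(|n^{\theta_\chi}|)$, which in particular makes $\alpha = \limsup|\phi_\chi(n)/n^{\theta_\chi}|$ finite.

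Finally, for $\alpha \ge 1$, restrict to the subsequence $n=p^k$: iterating part 1 of Lemma \ref{lem:2.1} gives $\phi_\chi(p^k) = \phi_\chi(p)^k = p^{k\theta_\chi} = (p^k)^{\theta_\chi}$, hence $|\phi_\chi(p^k)/(p^k)^{\theta_\chi}| = 1$ for every $k$, so the lim sup is at least $1$. The only real subtlety in this plan is verifying that the geometric-series argument goes through uniformly in the digits of $n$; this is where row-regularity enters essentially, guaranteeing the ratio $t/|\phi_\chi(p)|$ is strictly less than $1$ so that the telescoping sum of ``error'' terms coming from lower digits cannot overwhelm the leading $|\phi_\chi(p)|^k$ contribution.
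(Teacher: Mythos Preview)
Your proof is correct, and it takes a somewhat different route from the paper's own argument. The paper proceeds iteratively: it defines $\alpha_k=\max\{|\phi_\chi(n)/n^{\theta_\chi}|:p^{k-1}<n\le p^k\}$, then peels off one base-$p$ digit at a time via Lemma~\ref{lem:2.1} to show $\alpha_k\le\alpha_{k+1}<\alpha_k+Cq^k$ with $q=\max_b|T_\chi(b)|/|\phi_\chi(p)|<1$, and concludes by monotone convergence. You instead unfold the recursion completely into the closed digit formula $\phi_\chi(n)=\sum_{j=0}^k\phi_\chi(n_j)\phi_\chi(p)^j\prod_{i>j}T_\chi(n_i)$ and bound the whole sum at once by a single geometric series. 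Both arguments rest on exactly the same mechanism (row-regularity forces $q<1$), and both finish the $\alpha\ge1$ claim via the subsequence $n=p^k$. Your approach is a bit cleaner and gives an explicit constant $M/(1-q)$; the paper's approach additionally yields that the block maxima $\alpha_k$ are monotone increasing and hence actually converge to $\alpha$, a slightly stronger intermediate conclusion that it does not, however, use elsewhere.
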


\begin{proof}
    We first define a sequence of positive real numbers $\{\alpha_k\}_{k>0}$ as follows
    \begin{equation*}
        \alpha_k=\max\left\{\left|\frac{\phi_\chi(n)}{n^{\tc}}\right| : p^{k-1}< n \leq p^k\right\}
    \end{equation*}
    It is clear that, if it exists, $\lim_{k\rightarrow \infty} (\alpha_k)=\alpha$. We will show that $\alpha_{k+1}\in [\alpha_k, \alpha_k+|\phi_\chi(p)|\alpha_1 q^k)$ for some $|q|<1$.  First, we will show that $\alpha_{k+1}\geq \alpha_k$. Select $p^{k-1}<n\leq p^{k}$ such that $|\phi_\chi(n)/n^{\tc}|=\alpha_k$. Then since $p^k<np\leq p^{k+1}$, we can use lemma \ref{lem:2.1} part 1 to show

    \begin{equation*}
        \alpha_{k+1}\geq \left|\frac{\phi_\chi(pn)}{(pn)^{\tc}}\right|=\left|\frac{\phi_\chi(p)\phi_\chi(p)}{p^{\tc}n^{\tc}}\right|=\left|\frac{\pc(p)\pc(n)}{\pc(p)n^{\tc}}\right|=\left|\frac{\pc(n)}{n^{\tc}}\right|=\alpha_k.
    \end{equation*}

    Next, we can show that $\alpha_{k+1}-\alpha_k\leq |\pc(p)|\alpha_1 q^k$. We consider some $n$ such that $p^k<n\leq p^{k+1}$ and $|\pc(n)/n^{\tc}|=\alpha_{k+1}$, and write $n=pm+b$ for $p^{k-1}<m\leq p^k$ and $0\leq b <p$. Then using both parts of lemma \ref{lem:2.1} we see that

    \begin{align*}
        \alpha_{k+1}=&\left|\frac{\pc(pm+b)}{n^{\tc}}\right| \\
        \leq & \frac{1}{|(mp)^{\theta_{\chi}}|}\left|\pc{p}\pc(m)+\pc(b)T_\chi(m)\right| \\
        \leq & \frac{1}{|(mp)^{\theta_{\chi}}|}\big(|\pc(p)||\pc(m)|+|\pc(b)||T_\chi(m)|\big)
    \end{align*}

    Since $b< p$, we know that $|\pc(b)|< \alpha_1 |p^{\tc}|$. We also use the fact that $|\pc(p)|=|p^{\tc}|$.
    \begin{align*}
        \alpha_{k+1}<& \frac{|\pc(m)|}{|m^{\theta_{\chi}}|}+\frac{\alpha_1 |T_\chi(m)|}{|m^{\theta_{\chi}}|} \\
        \leq & \alpha_k+\frac{\alpha_1 |T_\chi(m)|}{|(p^{k-1})^{\theta_{\chi}}|}.
    \end{align*}

    The proof of the theorem would follow if we can bound $|T_\chi(m)|/(m^{k-1})^{\tc}$. This is quite straight forward, but we will factor out to a lemma so we can reference it later.

    \begin{lemma}\label{lem:3.3}
        Let $m<p^k$ be a nonnegative integer and $\chi$ a row regular character. Then there exists some real number $0<q<1$ independent of $m$ such that
        \begin{equation*}
            \left|\frac{T_\chi(m)}{m^{\pc}}\right|\leq q^k 
        \end{equation*}
    \end{lemma}

    \begin{proof}
        Since $m$ is a $k$ digit number, we can use equation \ref{eq:1} to write $|T_\chi(m)|=\prod_{j=0}^{k-1}|T_\chi(m_j)|$, where $m_j$ are the $p$-ary digits of $m$. simply maximising each entry in the product, we have $|T_\chi(m)|\leq \prod_{j=0}^{k-1}\max\{|T_\chi(t)|:0\leq t < p\}=(\max\{|T_\chi(t)|:0\leq t < p\})^k$. With this in mind, we let $q=\max\{|T_\chi(t)|:0\leq t < p\}/\pc(p)$, and row-regularity implies $q<1$. This gives
        \begin{equation*}
            \left|\frac{T_\chi(n)}{n^{\tc}}\right|\leq \left|\frac{\max\{|T_\chi(t)|:0\leq t < p\})^k}{(p^k)^{\tc}}\right|=\left|\frac{\max\{|T_\chi(t)|:0\leq t < p\})^k}{\pc(p)^k}\right|\leq q^k.
        \end{equation*}
        Which is what we wanted to show.
    \end{proof}
    
    Using lemma \ref{lem:3.3}, we obtain

    \begin{align*}
        \alpha_{k+1}< & \alpha_k+|p^{\tc}|\alpha_1\frac{T_\chi(n)}{n^{\theta_\chi}} \\
        =& \alpha_k+|\pc(p)|\alpha_1q^k.
    \end{align*}

    This completes the proof that $\alpha_{k+1}\in [\alpha_k, \alpha_k+|\pc(p)|\alpha_1 q^k)$ for some $|q|<1$. Since the geometric series $\alpha_1+\prod_{k=1}^{\infty} |\pc(p)|\alpha_1 q^k$ converges, we have that ${\alpha_k}$ is bounded and $\lim_{k\rightarrow \infty} \alpha_k$ converges by monotone convergence theorem. This means that $\alpha=\limsup(\pc(n)/n^{\tc})$ exists. In particular, this implies that $\pc(n)=O(n^{\tc})$

    To show that $\alpha\geq 1$, we simply note that $\pc(p^k)/(p^k)^{\tc}=1$ for all $k$ by a simple application of lemma \ref{lem:2.1}.
\end{proof}

We note that the sum of the geometric series discussed at the end of that proof gives an effective upper bound for $\alpha$.

Next, we generalize a theorem of \cite{10.1007/BFb0083577} about the behavior of $\phi_{\chi_0}$ to arbitrary row-regular characters. We define the following function:

\begin{equation*}
    \psi_\chi(n)=\frac{\phi_\chi(n)}{n^\theta_\chi}.
\end{equation*}

Theorem \ref{the:3.2} implies that $\psi_\chi(n)=O(1)$, and lemma \ref{lem:2.1} implies $\psi_\chi(pn)=\psi_\chi(n)$. Using this formula, we canonically extend the domain of $\psi_\chi$ to $D=\{{n}/{p^k}:n\in \mathbb{Z}^{>0}, k\in \mathbb{Z}^{\geq 0}\}$. We remark that $D$ is dense in $\mathbb{R}^{>0}$, so if we show that $\psi_\chi$ is continuous on $D$, we get a canonical extension to $\mathbb{R}^{>0}$. Indeed, we will prove

\begin{theorem}\label{the:3.4}
    Let $\chi$ be a row-regular character. Then $\psi_\chi(x)$ is uniformly continuous on subsets bounded away from $0$.
\end{theorem}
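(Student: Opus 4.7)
The plan is to directly bound $|\psi_\chi(x)-\psi_\chi(y)|$ in terms of $|x-y|$ using the recursive structure of $\phi_\chi$ from Lemma \ref{lem:2.1} together with the exponential decay of $T_\chi$ guaranteed by row-regularity (Lemma \ref{lem:3.3}). Fix $A>0$ and consider $x,y\in D\cap[A,\infty)$ with $|x-y|<\delta$. Choosing a common denominator large enough, I write $x=n/p^k$ and $y=m/p^k$ for positive integers $n,m\geq Ap^k$ with $|n-m|<\delta p^k$; by the functional equation $\psi_\chi(pn)=\psi_\chi(n)$, this gives $\psi_\chi(x)=\phi_\chi(n)/n^{\theta_\chi}$ and $\psi_\chi(y)=\phi_\chi(m)/m^{\theta_\chi}$ independently of the choice of $k$, so it suffices to bound $|\psi_\chi(n)-\psi_\chi(m)|$.

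I split the difference as
$$
\psi_\chi(n)-\psi_\chi(m)=\psi_\chi(n)\bigl(1-(n/m)^{\theta_\chi}\bigr)+\frac{\phi_\chi(n)-\phi_\chi(m)}{m^{\theta_\chi}}.
$$
The first term is $O(\delta/A)$: Theorem \ref{the:3.2} gives $|\psi_\chi(n)|=O(1)$, and a Taylor expansion yields $(1+h)^{\theta_\chi}=1+O(|h|)$ for $|h|=|n-m|/m\leq\delta/A$. For the second term, I pick an integer $L$ with $p^L>|n-m|$ and write $n=\tilde n p^L+n_0$, $m=\tilde m p^L+m_0$ with $n_0,m_0<p^L$. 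Lemma \ref{lem:2.1}(2) gives $\phi_\chi(n)=\phi_\chi(p^L)\phi_\chi(\tilde n)+T_\chi(\tilde n)\phi_\chi(n_0)$ and similarly for $m$. Since $|n-m|<p^L$ forces $|\tilde n-\tilde m|\leq 1$, the ``top-part'' contribution $\phi_\chi(p^L)(\phi_\chi(\tilde n)-\phi_\chi(\tilde m))$ is either $0$ or $\pm\phi_\chi(p^L)T_\chi(\tilde m)$ by the very definition of $\phi_\chi$. Combining with $|\phi_\chi(n_0)|,|\phi_\chi(m_0)|=O(p^{L\tau})$ (Theorem \ref{the:3.2} with $\tau=\mathrm{Re}(\theta_\chi)$) and $|T_\chi(\tilde n)|,|T_\chi(\tilde m)|=O(q^{K-L}p^{(K-L)\tau})$ (Lemma \ref{lem:3.3} with $K$ the number of base-$p$ digits of $n$ and some $q<1$), I obtain $|\phi_\chi(n)-\phi_\chi(m)|=O(q^{K-L}p^{K\tau})$; dividing by $|m^{\theta_\chi}|\asymp p^{K\tau}$ makes the second term $O(q^{K-L})$.

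To conclude, I choose $L$ so that $p^L$ is just barely larger than $|n-m|$, namely $L=k+\lceil\log_p\delta\rceil$, after inflating $k$ first if necessary to ensure $L\geq 0$. Then $K-L=\log_p(1/\delta)+O(1)$ uniformly in $x,y$, with the $O(1)$ depending only on $A$, so $q^{K-L}=O(\delta^{\log_p(1/q)})=O(\delta^\alpha)$ for some $\alpha>0$. Combining, $|\psi_\chi(x)-\psi_\chi(y)|=O(\delta)+O(\delta^\alpha)\to 0$ as $\delta\to 0$, uniformly in $x,y\in D\cap[A,\infty)$, which is the desired uniform continuity. The main obstacle is handling the ``straddle'' case $|\tilde n-\tilde m|=1$ cleanly: a priori the extra term $\phi_\chi(p^L)(\phi_\chi(\tilde n)-\phi_\chi(\tilde m))$ looks like it could be of order $p^{K\tau}$, but the telescoping identity $\phi_\chi(\tilde m+1)-\phi_\chi(\tilde m)=T_\chi(\tilde m)$ recasts it as $\pm\phi_\chi(p^L)T_\chi(\tilde m)$, whose size is bounded by Lemma \ref{lem:3.3} in precisely the same way as the other terms, so no hidden contribution appears.
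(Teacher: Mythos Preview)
Your argument is correct and uses the same core ingredients as the paper---Lemma~\ref{lem:2.1}, Lemma~\ref{lem:3.3}, Theorem~\ref{the:3.2}, and a first-order expansion of $z\mapsto z^{-\theta_\chi}$---but the decomposition is organized differently. The paper fixes a grid point $y_0\in p^{-b}\mathbb{Z}$ near both $x$ and $x_0$ and bounds $|\psi_\chi(x)-\psi_\chi(y_0)|$ and $|\psi_\chi(x_0)-\psi_\chi(y_0)|$ separately; because $y_0$ is an integer multiple of $p^{-b}$, the comparison $\psi_\chi(p^rn+k)-\psi_\chi(p^rn)$ always has the same top part and no carry ever occurs. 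You instead compare $n$ and $m$ directly after truncating their base-$p$ expansions at level $L$, which forces you to handle the straddle case $|\tilde n-\tilde m|=1$ explicitly via the telescoping identity $\phi_\chi(\tilde m+1)-\phi_\chi(\tilde m)=T_\chi(\tilde m)$. Both routes produce the same H\"older-type modulus $O(\delta^{\alpha})$ with $\alpha=\log_p(1/q)$; your version makes this exponent more transparent, while the paper's grid-point detour avoids the carry bookkeeping altogether.

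One small imprecision: you write ``$K-L=\log_p(1/\delta)+O(1)$ with the $O(1)$ depending only on $A$,'' but $K-L$ can be arbitrarily large when $x$ is large, since $K\approx k+\log_p x$ while $L\approx k+\log_p\delta$. What you actually have, and all you need, is the one-sided estimate $K-L\geq \log_p(1/\delta)+\log_p A-O(1)$; because $q<1$ this still yields $q^{K-L}=O_A(\delta^{\alpha})$, so the conclusion is unaffected.
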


\begin{proof}
    We will prove uniform continuity in the set $[1,\infty)$, and uniform continuity in sets bounded away from $0$ will follow by using the fact that $\psi_\chi(x)=\psi_\chi(px)$

    For nonnegative integers $a,r,k$ with $k<p^{r-a}$ and a positive integer $n$, we will bound the difference $\psi_\chi(p^rn+k)-\psi_\chi(p^rn)$ uniformly in $r$. Expanding definitions and applying \ref{lem:2.1} we obtain
    \begin{align*}
        \left|\psi_\chi(p^rn+k)-\psi_\chi(p^rn)\right|=&\left|\frac{\pc(p^rn+k)}{(p^rn+k)^{\tc}}-\frac{\pc(p^rn)}{(p^rn)^{\tc}}\right|\\
        =&\left|\frac{\pc(p^rn)}{(p^rn+k)^{\tc}}+\frac{T_\chi(n)\pc(k)}{(p^rn+k)^{\tc}}-\frac{\pc(p^rn)}{(p^rn)^{\tc}}\right|\\
        \leq &\left|\frac{\pc(p^rn)}{(p^rn+k)^{\tc}}-\frac{\pc(p^rn)}{(p^rn)^{\tc}}\right|+\left|\frac{T_\chi(n)\pc(k)}{(p^rn+k)^{\tc}}\right|
    \end{align*}

    We will bound each of these terms separately. The 2nd term is easier to bound, so we will do it first. Using \ref{the:3.2}, we obtain

    \begin{equation*}
        \left|\frac{T_\chi(n)\pc(k)}{(p^rn+k)^{\tc}}\right|\leq \left|\frac{T_\chi(n)\alpha k^{\tc}}{(p^rn)^{\tc}}\right|\leq \left|\frac{T_\chi(n)\alpha (p^{r-a})^{\tc}}{(p^rn)^{\tc}}\right|=\left|\frac{T_\chi(n)}{(p^{a}n)^{\tc}}\right|.
    \end{equation*}

    Using lemma \ref{lem:3.3}, we bound $|T_\chi(n)/n^{\tc}|\leq q^{\log_{p}(n)}=O (1/n^{\omega_\chi})$ for some $1\geq \omega_\chi>0$. (We may be able to obtain a $\omega_\chi\geq 1$ for some characters, but it will be of convenience later to restrict it to be less than $1$, and the equation is still true in that case.) This gives the bound that the second term is $O(n^{-\omega_\chi}p^{-a})$ uniformly in $r$.

    Next we bound the first term. To this end, we prove the following general lemma

    \begin{lemma}\label{lem:3.5}
        Let $a,b$ be positive real numbers with $b<a$, and $\theta\in \mathbb{C}$ have positive real part. Then
        \begin{equation*}
            \left|\frac{1}{(a+b)^\theta}-\frac{1}{a^\theta}\right|=O\left(\frac{b}{a^{1+\theta}}\right).
        \end{equation*}
    \end{lemma}

    \begin{proof}
        Some elementary algebra yields
        \begin{equation*}
            \left|\frac{1}{(a+b)^\theta}-\frac{1}{a^\theta}\right|=\left|\frac{1^\theta-\left(1+\frac{b}{a}\right)^\theta}{(a+b)^\theta}\right|\leq \left|\frac{1-(1+\frac{b}{a})^\theta}{a^\theta}\right|.
        \end{equation*}

        We then use the generalized binomial theorem to expand $(1+\frac{a}{b})^\theta$. 
        \begin{align*}
            \left|\frac{1}{(a+b)^{\theta}}-\frac{1}{a^\theta}\right|\leq & \left|\frac{1-(1+\frac{b}{a})^\theta}{a^\theta}\right| \\
            =& \left|\frac{1-\sum_{n=0}^\infty \binom{\theta}{n}\left(\frac{b}{a}\right)^n}{a^\theta}\right| \\
            =&\left|\frac{-\sum_{n=1}^\infty \binom{\theta}{n}\left(\frac{b}{a}\right)^n}{a^\theta}\right| \\
            =&O\left(\frac{b}{a^{\theta+1}}\right).
        \end{align*}

         Which is what we wanted to show.
    \end{proof}

    We now return to bounding the first term. Since $a\geq 0$, we know that $k<p^rn$, so we can apply lemma \ref{lem:3.5} to the denominator of the first term. 

    \begin{align*}
        \left|\frac{\pc(p^rn)}{(p^rn+k)^{\tc}}-\frac{\pc(p^rn)}{(p^rn)^{\tc}}\right|=&|\pc(p^rn)|\left|\frac{1}{(p^rn+k)^{\tc}}-\frac{1}{(p^rn)^{\tc}}\right|\\
        \leq& |(p^rn)^{\tc}|\left|\frac{1}{(p^rn+k)^{\tc}}-\frac{1}{(p^rn)^{\tc}}\right|\\
        =& |(p^rn)^{\tc}|O\left(\frac{k}{(p^rn)^{\tc+1}}\right) \\
        =& O\left(\frac{p^{r-a}}{(p^rn)}\right)=O\left(\frac{1}{np^a}\right).
    \end{align*}

    Where for the last inequality, we use the fact that $n$ is a positive integer to bound it by $1$.

    Combining this with the bound on the first term, we obtain a bound that goes to $0$ as $a$ goes to infinity

    \begin{equation*}
        \left|\psi_\chi(p^rn+k)-\psi_\chi(p^rn)\right|=O\left(\frac{1}{n^{\omega_\chi}p^a}\right)+O\left(\frac{1}{np^a}\right)=O\left(\frac{1}{n^{\omega_\chi}p^a}\right).
    \end{equation*}

    Let $x_0=\frac{n}{p^b}\geq 1$. If $0\leq x-x_0< p^{1-b}$, we will bound $\psi_\chi(x)-\psi_\chi(x_0)$ in such a way that it goes to $0$ as $b$ goes to infinity. This will be the last ingredient needed for uniform continuity.

    Since $x_0\geq 1$, we obtain $n\geq p^b$. Further, we write $x-x_0=k/(p^{b-1+r})$ with $k<p^{r}$. using the fact that $\psi_\chi(x)=\psi_\chi(px)$, we see that
    \begin{equation*}
        |\psi_\chi(x)-\psi_\chi(x_0)|=\left|\psi_\chi\left(n+\frac{k}{p^{1+r}}\right)-\psi_\chi(n)\right|=|\psi_\chi(p^{1+r}n+k)-\psi_\chi(p^{1+r}n)|=O\left(\frac{1}{n^{\omega_\chi}p}\right)=O\left(\frac{1}{p^{b\omega_\chi}}\right).
    \end{equation*}

    Finally, we move towards uniform continuity. Let $x_0\geq 1$. And let $|x-x_0|< \frac{1}{p^b}$. Define $y_0=\frac{k}{p^b}$ to be an element of $\frac{1}{p^b}\mathbb{Z}$ such that $y_0<x_0,x$ and $|y_0-x|<\frac{1}{p^{b-1}}$ and $|y_0-x_0|<\frac{1}{p^{b-1}}$. Then the bounds above imply that

    \begin{equation*}
        |\psi_\chi(x)-\psi_\chi(x_0)|=|\psi_\chi(y_0)-\psi_\chi(x)|+|\psi_\chi(y_0)-\psi_\chi(x_0)|=2O\left(\frac{1}{p^{b\omega_\chi}}\right)
    \end{equation*}

    Since the right side goes to $0$ uniformly in $x_0$ as $a\rightarrow \infty$, it follows that $\psi_\chi(x)$ is uniformly continuous in $[1,\infty)$. Uniform continuity on sets bounded away from $0$ follows as mentioned in the beginning of the proof.
\end{proof}
        
We also have a near-inverse of theorem \ref{the:3.2}, that only leaves out a tiny edge case. To do this, we introduce a new definition.

\begin{defn}
    A \textbf{row-dominant} character $\chi$ is a character $\chi$ of modulus $p$ such that there is a $b<p$ such that $|T_\chi(n)|>|\phi_\chi(p)|$.
\end{defn}

\begin{remark}
    A character $\chi$ would be neither row-regular or row-dominant if there is a $0\leq b<p$ such that $|T_\chi(b)|=|\pc(p)|$, but there is no $0\leq b<p$ such that $|T_\chi(b)|>|\pc(p)|$.
\end{remark}

\begin{theorem} \label{the:3.7}
    If $\chi$ is a row-dominant character, then $\pc(n)$ is not $O(n^{\tc})$.
\end{theorem}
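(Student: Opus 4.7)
The plan is to derive a contradiction from the hypothetical bound $\pc(n) = O(n^{\tc})$ by treating $T_\chi$ as a discrete derivative of $\pc$. Straight from the partial-sum definition of $\pc$, one has $T_\chi(n) = \pc(n+1) - \pc(n)$, so if $|\pc(n)| \leq C n^{\mathrm{Re}(\tc)}$ then the triangle inequality immediately gives $|T_\chi(n)| = O(n^{\mathrm{Re}(\tc)})$. Throughout I will use $|n^{\tc}| = n^{\mathrm{Re}(\tc)} = n^{\log_p|\pc(p)|}$, which follows from the principal-branch convention together with the identity $p^{\tc} = \pc(p)$; in particular $|p^{k\tc}| = |\pc(p)|^k$.

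The second half is to exhibit a sequence $n_k \to \infty$ on which $|T_\chi(n_k)/n_k^{\tc}| \to \infty$, contradicting the previous paragraph. Let $b$ be a row-dominant digit, so $|T_\chi(b)| > |\pc(p)|$. I take $n_k$ to have base-$p$ expansion $\overline{1\,b\,b\,\cdots\,b}$, a leading $1$ followed by $k$ copies of $b$, i.e.
\begin{equation*}
    n_k \;=\; p^k + b\cdot\frac{p^k-1}{p-1}.
\end{equation*}
Applying equation (\ref{eq:1}) and using $T_\chi(1)=\chi(1)=1$ gives $T_\chi(n_k) = T_\chi(b)^k$. Since $p^k \leq n_k < 2p^k$, we also have $|n_k^{\tc}| = \Theta(|\pc(p)|^k)$. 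Hence
\begin{equation*}
    \left|\frac{T_\chi(n_k)}{n_k^{\tc}}\right| \;=\; \Theta\!\left(\left|\frac{T_\chi(b)}{\pc(p)}\right|^{k}\right) \;\longrightarrow\; \infty,
\end{equation*}
contradicting the $O(n^{\tc})$ bound on $T_\chi$. Prepending a leading $1$ rather than using the pure repunit $\overline{bb\cdots b}$ costs nothing and is a small but important trick: it keeps the construction nontrivial in the degenerate case $b = 0$, where $T_\chi(0)=1$ and row-dominance forces $|\pc(p)|<1$, so that $n_k = p^k$ already suffices because $|\pc(p)|^{-k}\to\infty$.

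I do not expect a genuine obstacle. The real content is packed into two earlier facts: the multiplicativity of $T_\chi$ in equation (\ref{eq:1}), which lets us inflate $|T_\chi(b)|^k$ by stacking copies of a single row-dominant digit, and the elementary identity $T_\chi(n) = \pc(n+1) - \pc(n)$, which lets any bound on $\pc$ be converted into a bound on $T_\chi$. The only subtlety worth flagging explicitly is the complex logarithm: one must consistently read $O(n^{\tc})$ as a bound on absolute values, i.e., on $n^{\log_p|\pc(p)|}$, rather than on the complex number $n^{\tc}$ itself. Once that is unpacked the two halves of the argument fit together cleanly.
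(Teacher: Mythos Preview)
Your argument is correct and follows essentially the same route as the paper: use $T_\chi(n)=\pc(n+1)-\pc(n)$ to transfer an $O(n^{\tc})$ bound from $\pc$ to $T_\chi$, then stack copies of a row-dominant digit $b$ so that $|T_\chi(n_k)|$ grows like $|T_\chi(b)|^k$ while $|n_k^{\tc}|$ only grows like $|\pc(p)|^k$. One small slip: $T_\chi(1)=\chi\!\bigl(\tbinom{1}{0}\bigr)+\chi\!\bigl(\tbinom{1}{1}\bigr)=2$, not $1$, so in fact $T_\chi(n_k)=2\,T_\chi(b)^k$; the extra factor of $2$ is of course harmless. Your prepended leading $1$ is actually a mild improvement on the paper's construction, which takes the pure repunit $n_k=\sum_{j=0}^{k-1}bp^j$ and thus tacitly needs $b\neq 0$.
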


\begin{proof}
    Suppose for the sake of contradiction that $\pc(n)$ is $O(n^{\tc})$. Then let $b$ be the integer such that $0\leq b<p$ and $|T_\chi(b)|>|\pc(p)|$ whose existence is guaranteed by row-dominance. Now define the integer sequence $\{n_k\}_{k>0}=(\sum_{j=0}^{k-1} bp^j)$. We then compute $\pc(n_k+1)-\pc(n_k)$

    \begin{equation*}
        \pc(n_k+1)-\pc(n_k)=T_\chi(n_k)=\prod_{i=0}^{k-1} T_{\chi}(b)=T_\chi(b)^k
    \end{equation*}

    Where in the last equality we use equation \ref{eq:1}. Therefore

    \begin{align*}
        \frac{\pc(n_k+1)}{(p^k)^{\tc}}-\frac{\pc(n_k)}{(p^k)^{\tc}}=&\frac{T_\chi(b)^k}{(p^k)^{\tc}} \\
        \left|\frac{\pc(n_k+1)}{(p^k)^{\tc}}\right|+\left|\frac{\pc(n_k)}{(p^k)^{\tc}}\right|\geq &\left|\frac{T_\chi(b)^k}{(p^k)^{\tc}} \right|.
    \end{align*}

    Now, by our assumption, for sufficiently large $n$ we have $|\pc(n)/(n^{\tc})|\leq \alpha$ for some real number $\alpha$. We also note that $|\pc(n_k+1)/(p^{k})^{\tc}|\leq |(\pc(n_k+1)/(n_k+1)^{\tc}|\leq \alpha$ for sufficiently large $k$. This means we have

    \begin{align*}
        2\alpha \geq \left|\frac{T_\chi(b)^k}{(p^{k})^{\tc}}\right| \geq \Bigr|\frac{T_\chi(b)^k}{\pc(p)^k}\Bigr|.
    \end{align*}

    However, since $|T_\chi(b)|>|\pc(p)|$, the right hand side of the above equation is unbounded, so it cannot be bounded by $2\alpha$. This is a contradiction, so $\pc(n)$ is not $O(n^{\theta_{\chi}})$.
    
\end{proof}

There is one more theorem on the growth rate of $\pc(n)$. This one allows us to bound the growth rate of $\pc(n)$ for non-row-regular characters $\chi$. This is where our method to prove theorem \ref{the:1.2} deviates from the one presented in \cite{2001paper}. In their proof, they avoided the non-row-regular case by working with bivariate block multiplicative functions. We instead handle the non-row-regular case directly. To do this, we will define the real number $\rho_\chi$

\begin{equation*}
    \rc=\max\{\mathfrak{R}(\log_p(T_\chi(b))):0\leq b<p\}.
\end{equation*}

\newpage

\begin{theorem} \label{the:3.8}
    Let $\chi$ be not row-regular and let $\varepsilon>0$. then $\pc(n)=O(n^{\rc+\varepsilon})$.
\end{theorem}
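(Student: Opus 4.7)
The plan is to iterate Lemma \ref{lem:2.1} along the base-$p$ expansion and control the resulting recursion using the single quantity $P = p^{\rc} = \max\{|T_\chi(b)| : 0 \leq b < p\}$. The only role played by the non-row-regularity hypothesis is to force $|\pc(p)| \leq P$, which is exactly what keeps the recursion from being driven by the growth rate $|\pc(p)|^k$ instead of by $P^k$.

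Concretely, I would set $M = \max\{|\pc(b)| : 0 \leq b \leq p\}$ and define $f(k) = \max\{|\pc(n)| : 0 \leq n \leq p^k\}$. For $n \leq p^k$, writing $n = n_{k-1}p^{k-1} + n'$ with $0 \leq n_{k-1} < p$ and $n' < p^{k-1}$ and applying both parts of Lemma \ref{lem:2.1} gives
\begin{equation*}
    |\pc(n)| \leq |\pc(n_{k-1})| \cdot |\pc(p^{k-1})| + |T_\chi(n_{k-1})| \cdot |\pc(n')| \leq M \cdot |\pc(p)|^{k-1} + P \cdot f(k-1).
\end{equation*}
Since $|\pc(p)| \leq P$, taking the maximum over $n$ yields the clean recursion $f(k) \leq M P^{k-1} + P f(k-1)$.

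Unrolling gives $f(k) \leq k M P^{k-1} + P^k f(0) = O(k P^k)$. Since $T_\chi(0) = 1$ we have $P \geq 1$ and $\rc \geq 0$, so for $p^{k-1} < n \leq p^k$ we have $k = O(\log n)$ and $P^k = p^{\rc} \cdot p^{(k-1)\rc} \leq p^{\rc} \cdot n^{\rc}$. Combining these, $|\pc(n)| = O(\log(n) \cdot n^{\rc}) = O(n^{\rc + \varepsilon})$ for any fixed $\varepsilon > 0$, which is the claim.

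The main obstacle is conceptual rather than calculational: one must recognize that non-row-regularity is precisely the hypothesis that makes $P$ (rather than $|\pc(p)|$) the dominant geometric ratio in the recursion, so that the $|\pc(p)|^{k-1}$ inhomogeneous term can be absorbed and the recursion telescopes into a clean geometric form. Without this, the $|\pc(p)|^{k-1}$ term would drive the growth and one would recover the $n^{\tc}$ behavior of Theorem \ref{the:3.2}. A minor edge case worth noting is when $\rc = 0$: the bound degenerates to $O(\log n)$, which is still $O(n^\varepsilon)$ as required.
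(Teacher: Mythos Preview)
Your argument is correct and in fact slightly sharper than the paper's, yielding $|\pc(n)|=O(n^{\rc}\log n)$ before you throw away the logarithm. The paper's proof runs differently: it peels off the \emph{last} base-$p$ digit, writing $n=pm+b$ with $b<p$, and tracks the normalized quantity $\alpha_k=\max_{p^{k-1}<n\le p^k}|\pc(n)|/n^{\rc+\varepsilon}$. This forces them to bound $|T_\chi(m)|$ for the large quotient $m$ via the digit-product formula, and the $\varepsilon$ is baked in from the outset to make $q=\max_b|T_\chi(b)|/p^{\rc+\varepsilon}$ strictly less than $1$, after which $\alpha_{k+1}\le\alpha_k+Cq^k$ sums geometrically. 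Your choice to peel off the \emph{leading} digit instead is what makes the argument cleaner: the $T_\chi$ factor lands on a single digit $n_{k-1}<p$ and is bounded by $P$ immediately, while the $\pc(p)^{k-1}$ factor is controlled by $P^{k-1}$ via the non-row-regularity hypothesis, so the recursion $f(k)\le MP^{k-1}+Pf(k-1)$ telescopes with no geometric tail to sum. The one small point you glossed over is $n=p^k$, where the decomposition $n=n_{k-1}p^{k-1}+n'$ with $n_{k-1}<p$ fails; but $|\pc(p^k)|=|\pc(p)|\cdot|\pc(p^{k-1})|\le P\,f(k-1)$ directly, so the recursion still holds there.
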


\begin{proof}
    This proof follows an outline very similar to \ref{the:3.2}. We define a sequence $\{\alpha_k\}_{k>0}$ as
    \begin{equation*}
        \alpha_k=\max\left\{\left|\frac{\pc(n)}{n^{\rc+\varepsilon}}\right| : p^{k-1}< n \leq p^k\right\}
    \end{equation*}

    We will show that $\alpha_{k+1}\leq \alpha_k+p^{\rc+\varepsilon}\alpha_1q^k$ for $q<1$. We consider some $n$ such that $p^k<n\leq p^{k+1}$ and $|\pc(n)/n^{\rc+\varepsilon}|=\alpha_{k+1}$, and write $n=pm+b$ for $p^{k-1}<m\leq p^k$ and $0\leq b <p$. Then using both parts of lemma \ref{lem:2.1} we see that

    \begin{align*}
        \alpha_{k+1}=&\left|\frac{\pc(pm+b)}{n^{\rc+\varepsilon}}\right| \\
        \leq & \frac{1}{(pm)^{\rc+\varepsilon}}\left|\pc(p)\pc(m)+\pc(b)T_\chi(m)\right| \\
        \leq & \frac{1}{(pm)^{\rc+\varepsilon}}\big(|\pc(p)||\pc(m)|+|\pc(b)||T_\chi(m)|\big).
    \end{align*}

    Since $b\leq p$ we know $\frac{\pc(b)}{p^{\rc+\varepsilon}}\leq \alpha_1$. Furthermore, since $\chi$ is not row regular, $\frac{\pc(p)}{p^{\rc+\varepsilon}}\leq 1$. Therefore

    \begin{align*}
        \alpha_{k+1}\leq&\frac{|\pc(m)|}{m^{\rc+\varepsilon}}+\alpha_1\frac{|T_\chi(m)|}{m^{\rc+\varepsilon}} \\
        \leq& \alpha_k+\alpha_1\frac{T_\chi(m)}{(p^{k-1})^{\rc+\varepsilon}}
    \end{align*}

    Since $m$ is a $k$ digit number, we can use \ref{eq:1} to write $|T_\chi(m)|=\prod_{j=0}^{k-1}|T_\chi(d_j)|$, where $d_j$ are the $p$-ary digits of $m$. Taking the largest possible value of $|T_\chi(m)|$, we maximise each entry in the product to get $|T_\chi(m)|\leq \prod_{j=0}^{k-1}\max\{|T_\chi(t)|:0\leq t < p\}=(\max\{|T_\chi(t)|:0\leq t < p\})^k$. With this in mind, we let $q=\max\{|T_\chi(t)|:0\leq t < p\}/(p^{\rc+\varepsilon})$, and the definition of $\rc$ implies $q<1$. This gives

    \begin{align*}
        \alpha_{k+1}\leq & \alpha_k+|p^{\rc+\varepsilon}|\alpha_1\frac{(\max\{|T_\chi(t)|:0\leq t < p\})^k}{(p^{k})^{\rc+\varepsilon}} \\
        \leq & \alpha_k+p^{\rc+\varepsilon}\alpha_1q^k.
    \end{align*}

    Therefore $\alpha_{k+1}\leq \alpha_k+p^{\rc+\varepsilon}\alpha_1q^k$. Since the geometric series $\sum_{k=1}^\infty p^{\rc+\varepsilon}\alpha_1q^k$ converges, $\{\alpha_k\}$ must have an upper bound, which means that $\pc(n)=O(n^{\rc+\varepsilon})$.
\end{proof}

As with \ref{the:3.2}, the geometric series gives an effective upper bound for the constant implied by the big-O.

The behavior of $\pc(n)$ for row-dominant characters $\chi$ is extremely erratic, as some portions of it (for example the $n_k$ discussed in \ref{the:3.7}) grow faster than $O(n^{\tc})$, whereas other parts (like $p^k$) grow like $O(n^{\tc})$. However, as the previous theorem described, these are also the slowest growing $\pc(n)$, as $\rc< 1$ in general. So they do not have a significant contribution to the formula in lemma \ref{lem:explicitformula}.

\section{Row-Regularity and the Fundamental Domain of Pascal’s Triangle mod $p$}\label{sec:fd}

Thanks to Lucas' theorem and the result of \cite{GARFIELD19921}, study of Pascal's triangle mod $p$ can be reduced to understanding of it's fundamental domain, that is $\tbinom{n}{m} \text{ mod } p$ for $n,m<p$. Therefore, strong understanding of the fundamental domain leads to strong understanding of the entire triangle. This can be seen in the relative simplicity of the theory of nonzero residues in Pascal's triangle mod $p$, which largely relies on the fact that the it is easy to see if a residue in the fundamental domain is nonzero. If $n,m<p$ then $\tbinom{n}{m}\equiv 0 \text{ mod } p$ if and only if $m>n$.

With this in mind, a reasonable place to look to make progress would be by studying the fundamental domain. However, our knowledge of the fundamental domain is largely conjectural. Roughly, the fundamental domain looks like this:

\begin{center}
    \begin{tabular}{c c c c c c c}
        $1$ &  &  &  &  &  & \\
        $1$ & $1$ &  &  &  &  & \\
        $1$ & $?$ & $1$ &  &  &  & \\
        $\vdots$ & $\vdots$ & $\ddots$ & $\ddots$ & & & \\
        $1$ & $?$ & $?$ & $\ddots$ & $1$ & & \\
        $1$ & $?$ & $?$ & $\hdots$ & $?$ & $1$ & \\
        $1$ & $-1$ & $1$ & $\hdots$ & $1$ & $-1$ & $1$ \\
    \end{tabular}
\end{center}

There are $1$s running down two sides, and alternative $1$s and $-1$s on the bottom side of the triangle. Inside the triangle, there appears to be a roughly even distribution of each nonzero residue class. This suggests the following conjecture of \cite{2019Fract..2750098B}:

\begin{conjecture}\label{conj:4.1}
    As the prime modulus $p$ goes to infinity, the following asymptotics hold:
    \begin{itemize}
        \item $A_{p}(1)\sim 3p$
        \item $A_p(-1)\sim p$
        \item If $r\neq -1,0,1$, then $A_p(r)\sim\tfrac{p}{2}$
    \end{itemize}
\end{conjecture}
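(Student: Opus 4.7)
The plan is to use Lemma~\ref{lem:explicitformula} to convert the conjecture into an estimate on $\phi_\chi(p)$ for each Dirichlet character $\chi \bmod p$, then to peel off the deterministic boundary contribution of the fundamental domain so that the problem collapses to a single interior character sum.

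First I would dispose of the principal character. By Lucas's theorem no entry of the fundamental domain vanishes, so
\begin{equation*}
    \phi_{\chi_0}(p) = \sum_{n=0}^{p-1}(n+1) = \frac{p(p+1)}{2}.
\end{equation*}
For a nontrivial $\chi$, I would decompose $T_\chi(n)$ into edge and interior terms. The two slanted edges contribute $1$ to $T_\chi(0)$ and $2$ to each $T_\chi(n)$ with $1 \le n \le p-2$, while the bottom row satisfies $\binom{p-1}{k} \equiv (-1)^k \pmod{p}$, giving
\begin{equation*}
    T_\chi(p-1) = \sum_{k=0}^{p-1} \chi(-1)^k = \begin{cases} p & \text{if } \chi(-1) = 1, \\ 1 & \text{if } \chi(-1) = -1. \end{cases}
\end{equation*}
Setting
\begin{equation*}
    S_\chi := \sum_{n=2}^{p-2} \sum_{k=1}^{n-1} \chi\!\left(\binom{n}{k}\right),
\end{equation*}
these identities collapse into
\begin{equation*}
    \phi_\chi(p) = \begin{cases} 3p - 3 + S_\chi & \text{if } \chi \text{ is nontrivial and even}, \\ 2p - 2 + S_\chi & \text{if } \chi \text{ is odd}. \end{cases}
\end{equation*}

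Plugging these expressions into Lemma~\ref{lem:explicitformula} and splitting the sum over $\chi$ by parity using the standard identities $\sum_{\chi\text{ even}} \chi(r) = \tfrac{p-1}{2}(\mathbf{1}[r=1] + \mathbf{1}[r=-1])$ and $\sum_{\chi\text{ odd}} \chi(r) = \tfrac{p-1}{2}(\mathbf{1}[r=1] - \mathbf{1}[r=-1])$, a short bookkeeping computation shows that all three asymptotics of the conjecture would follow from the single claim
\begin{equation*}
    \max_{\chi \neq \chi_0} |S_\chi| = o(p) \qquad \text{as } p \to \infty.
\end{equation*}

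The main obstacle is exactly this character-sum bound, and I expect it to be genuinely hard. The sum $S_\chi$ contains $\sim p^2/2$ unit-modulus terms, so even square-root cancellation yields only $O(p)$; the conjecture demands a further saving beyond the random model. Fixing $k$ and regarding $\binom{n}{k} = n(n-1)\cdots(n-k+1)/k!$ as a polynomial in $n$ of degree $k$ with simple roots lets one apply Weil's bound to the inner sum to get $O(k\sqrt{p})$ per $k$, but this is useless upon summing in $k$. A more promising direction would be to exploit the multiplicative structure of $n! \bmod p$ via Wilson's theorem and Jacobi sums, or to use Fourier analysis on $(\mathbb{Z}/p\mathbb{Z})^{\times}$ to detect cancellation in both indices simultaneously; however I do not see a route to $o(p)$ along these lines, and it is this missing ingredient that keeps the statement a conjecture.
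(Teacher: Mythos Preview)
Your reduction is sound and your diagnosis of the obstacle is accurate, but you should be aware that the paper does not prove Conjecture~\ref{conj:4.1} either. It is stated as a conjecture, and the paper's only supporting argument is Theorem~\ref{the:4.2}, which shows that the asymptotics hold \emph{with probability $1$ under a random model} in which the interior entries $\binom{n}{m}\bmod p$ are treated as independent uniform draws from $(\mathbb{Z}/p\mathbb{Z})^\times$ (subject to the symmetry $m\leftrightarrow n-m$). That argument computes the mean and variance of the interior contribution directly and never passes through characters at all.

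Your approach is genuinely different and in some ways sharper: you reduce the full conjecture to a single concrete analytic statement, namely $\max_{\chi\neq\chi_0}|S_\chi|=o(p)$. The boundary bookkeeping you carry out matches the paper's deterministic edge calculation, and your final obstruction---that Weil applied columnwise gives only $O(k\sqrt{p})$ and sums to nothing useful---is exactly the reason the paper leaves the statement as a conjecture rather than a theorem. What the paper's random-model argument buys is an explanation of \emph{why} one should believe $S_\chi$ is small (the interior behaves like a sum of $\sim p^2/2$ mean-zero terms with variance $\sim p^2$, hence typical size $\sim p$, and the conjecture needs slightly better than typical); what your approach buys is a precise target for anyone who wants to actually prove it. Neither completes to a proof, and you are right that the missing $o(p)$ bound is the whole difficulty.
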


We wish to make a heuristic argument to motivate this conjecture and other conjectures about the fundamental domain. In particular, let $n,m<p-1$ with $m<n$, $m\neq n$. We wish to model the value of $\binom{n}{m} \text{ mod } p$ as a random variable $X_{n,m}$ taking values in $\{1,2,\hdots p-1\}$ with probability $\frac{1}{p-1}$. We assume that $X_{n_1,m_1}$ and $X_{n_2,m_2}$ are independent unless $n_1=n_2$ and $m_1=m_2$ or $m_1=n_1-m_2$, in which case they are always equal.

Using this, we can motivate conjecture \ref{conj:4.1}. 

\begin{theorem}\label{the:4.2}
    Under the assumptions of the above random model, conjecture \ref{conj:4.1} holds with probability $1$.
\end{theorem}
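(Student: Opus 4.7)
The plan is to split $A_p(r)$ for each nonzero residue $r$ into a deterministic part coming from the two boundary legs and the alternating bottom row, plus a purely random part $Y_{p,r}$ living in the strict interior of rows $2$ through $p-2$. The boundary entries $\binom{n}{0}=\binom{n}{n}=1$ contribute $2p-1$ ones, and the entries $\binom{p-1}{m}=(-1)^m$ contribute an additional $(p-3)/2$ ones and $(p-1)/2$ negative ones to the interior of the last row. Hence the deterministic contribution to $A_p(r)$ equals $5(p-1)/2$ if $r=1$, equals $(p-1)/2$ if $r=-1$, and vanishes otherwise, so Conjecture~\ref{conj:4.1} reduces to showing that almost surely $Y_{p,r}\sim p/2$ for every nonzero $r$.

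To analyze $Y_{p,r}$, I would pass to independent orbit representatives under the symmetry $(n,m)\mapsto(n,n-m)$, parameterizing them by pairs $(n,m)$ with $2\leq n\leq p-2$ and $1\leq m\leq\lfloor n/2\rfloor$. By the model's independence hypothesis the variables $X_{n,m}$ at these representatives are jointly independent and uniform on $\{1,\dots,p-1\}$; each one contributes $c_{n,m}=2$ copies to the interior when $m<n/2$ and $c_{n,m}=1$ copy when $n=2m$. Writing
\begin{equation*}
Y_{p,r}=\sum_{(n,m)}c_{n,m}\mathbf{1}[X_{n,m}=r],
\end{equation*}
a direct count yields $\mathbb{E}[Y_{p,r}]=(p-3)(p-2)/(2(p-1))\sim p/2$, and since $\sum c_{n,m}^{2}=O(p^{2})$ independence of the summands gives $\operatorname{Var}(Y_{p,r})=O(p)$.

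Finally I would invoke an exponential concentration inequality. Chebyshev alone only yields $\Pr(|Y_{p,r}-\mathbb{E}[Y_{p,r}]|>\varepsilon p)=O(1/p)$, which is \emph{not} Borel--Cantelli summable; circumventing this is the main obstacle. To remedy it, apply Bernstein's inequality to the centered bounded independent summands $c_{n,m}\mathbf{1}[X_{n,m}=r]-\mathbb{E}(\cdots)$, each of absolute value at most $2$, with total variance $O(p)$, to conclude
\begin{equation*}
\Pr\!\left(|Y_{p,r}-\mathbb{E}[Y_{p,r}]|\geq\varepsilon p\right)\leq 2\exp(-c(\varepsilon)p)
\end{equation*}
for some positive $c(\varepsilon)$. (A fourth-moment estimate giving a summable $O(1/p^{2})$ tail would serve equally well.) A union bound over the $p-1$ nonzero residues preserves summability in $p$, so on the product probability space the Borel--Cantelli lemma implies that almost surely only finitely many primes carry any bad residue, and for every other prime $Y_{p,r}=p/2+o(p)$ uniformly in $r$. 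Combining this with the deterministic totals from the first paragraph yields the three asymptotics of Conjecture~\ref{conj:4.1} with probability one.
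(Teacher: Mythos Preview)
Your decomposition into deterministic boundary plus random interior, and your mean and variance computations, are exactly what the paper does; the arithmetic matches (the paper writes the interior sum over the same index set $2\le n\le p-2$, $1\le m\le n-1$, splits off the central column, and gets the same $\mathbb{E}[Y]=\frac{(p-3)(p-2)}{2(p-1)}\sim p/2$ and $\operatorname{Var}[Y]\sim p$).

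Where you and the paper differ is in the final ``probability~$1$'' step, and here your argument is actually the more careful one. The paper simply observes that $\sigma_Y\sim\sqrt{p/2}=o(p)$ and declares that Conjecture~\ref{conj:4.1} therefore holds with probability~$1$; but, as you point out, a second-moment (Chebyshev) bound only yields $\Pr(|Y-\mathbb{E}[Y]|>\varepsilon p)=O(1/p)$, and $\sum_{p\text{ prime}}1/p$ diverges, so Borel--Cantelli does not apply directly. Your use of Bernstein's inequality (or a fourth-moment bound) to obtain summable tails genuinely closes a gap that the paper leaves open. In short: same approach, but your version supplies the missing quantitative tail estimate needed for almost-sure convergence along the primes.
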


\begin{proof}
    Let $\mathbbold{1}_{k}(x)$ be the function that returns $1$ when $k=x$ and $0$ otherwise. Then we have
    \begin{equation*}
        A_p(r)=\sum_{n=0}^{p-1}\sum_{m=0}^{n}\mathbbold{1}_{r}\left(\binom{n}{m}\right).
    \end{equation*}
    
    The behavior for $n=p-1$, $m=n$ or $m=0$ is entirely predictable, so we consider the inside of the triangle. Since there is no inside of the triangle for $p=2,3$, we assume $p>3$ for the rest of this proof. Using our random model, we define a random variable $Y$ that determines the influence of the unpredictable inner region.
    
    \begin{equation*}
        Y=\sum_{n=2}^{p-2}\sum_{m=1}^{n-1} \mathbbold{1}_{r}(X_{n,m})=2\sum_{n=2}^{p-2}\sum_{m=1}^{\lfloor(n-1)/2\rfloor}\mathbbold{1}_{r}(X_{n,m})+\sum_{n=1}^{\lfloor(p-2)/2\rfloor} \mathbbold{1}_{r}(X_{2n,n})
    \end{equation*}
    
    The first sum is a binomial distribution with probability $\frac{1}{p-1}$ and $\frac{(p-3)^2}{4}$ trials. The second sum is a binomial distribution with probability $\frac{1}{p-1}$ and $\frac{p-3}{2}$ trials. Therefore, for any $r\neq 0$, we have
    \begin{align*}
        \mathbb{E}[Y]=&\frac{1}{p-1}\left(2\frac{(p-3)^2}{4}+\frac{p-3}{2}\right)=\frac{p^2-5p+6}{2p-2}\sim \frac{p}{2}\\
        \text{Var}[Y]=&\frac{p-2}{(p-1)^2}\left(4\frac{(p-3)^2}{4}+\frac{p-3}{2}\right)=\frac{2p^3-15p^2+37p-30}{2p^2-4p+2}\sim p
    \end{align*}
    
    Adding back in the adjustments for the outside of the triangle, we have $\mathbb{E}[A_p(1)]=\mathbb{E}[Y]+2p-1+(p+1)/2\sim 3p$, $\mathbb{E}[A_p(-1)]=\mathbb{E}[Y]+(p-1)/2\sim p$, and if $r\neq -1,0,1$, we have $\mathbb{E}[A_p(r)]=\mathbb{E}[Y]\sim p/2$. Since the standard deviation $\sigma_{A_p(r)}=\sigma_{Y}\sim \sqrt{p}=o(p)$, it follows that \ref{conj:4.1} holds with probability $1$. 
\end{proof}

Next, we turn our attention to predicting the behavior of $\pc(p)$ using the same probabilistic model. We begin essentially the same way as the previous theorem, as we have the identity
\begin{equation*}
    \pc(p)=\sum_{n=0}^{p-1}\sum_{m=0}^{n}\chi\!\left(\!\binom{n}{m}\!\right).
\end{equation*}

We once again ignore the border of the triangle as it is entirely predictable, and define a random variable $Y$ that determines the influence of the inside.
\begin{equation*}
    Y=\sum_{n=2}^{p-2}\sum_{m=1}^{n-1} \chi(X_{n,m})=2\sum_{n=2}^{p-2}\sum_{m=1}^{\lfloor(n-1)/2\rfloor} \chi(X_{n,m})+\sum_{n=1}^{\lfloor(p-2)/2\rfloor} \chi(X_{2n,n}).
\end{equation*}

We now reduce to the case where $\chi$ is nonprincipal. We see that $\chi(X_{n,m})$ is a random variable with mean $\mathbb{E}[\chi(X_{n,m})]=0$. This gives us the mean $\mathbb{E}[Y]=0$. For the variance of $Y$, We have that $\text{Var}[\chi(X_{n,m})]=1$. Further, each of the distinct $\chi(X_{n,m})$ in the sums are uncorrelated as they are independent. This gives us the variance

\begin{equation*}
    \text{Var}[Y]=4\left(\frac{(p-3)^2}{4}\right)+\frac{p-3}{2}=\frac{2p^2-11p+15}{2}\sim p^2.
\end{equation*}

Adding back the predictable component, for a nonprincipal even character $\chi$, we have that $\mathbb{E}[\pc(p)]=\mathbb{E}[Y]+3p=3p$, for an odd character, we have $\mathbb{E}[\pc(p)]=\mathbb{E}[Y]+2p+1\sim 2p$. 

This probabilistic model implies that many characters should be row regular (as a character is certainly row regular if $\pc(p)>p$.) However, the high variance implies that there should be many non-row-regular characters. These predictions turn out to fit the data quite nicely.

If we compute $\pc(p)/p$ for $\chi\neq \chi_0$ and $p<100$ (see section \ref{sec:Appendix}), and plot them on the complex plane, we see this picture:

\begin{center}
    \includegraphics[scale=0.8]{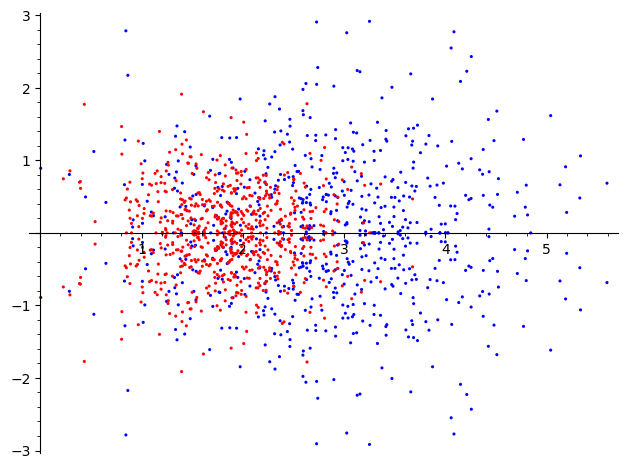}\\
    Red: $\phi_\chi(p)/p$ for $\chi(-1)=-1$. Blue: $\phi_\chi(p)/p$ for $\chi(-1)=1$.
\end{center}

And we see that most values tend to be around $2$ and $3$, though there is relatively large variance. Computation also yields the following result:

\begin{proposition}
    Not all characters $\chi$ are row-regular.
\end{proposition}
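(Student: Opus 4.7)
The plan is to exhibit, by explicit computation, a single prime $p$ and a nonprincipal character $\chi$ modulo $p$ for which the row-regularity condition of Definition \ref{rr} fails. The key structural observation that makes this tractable, without searching exhaustively, is the following identity for the bottom row of the fundamental domain. Since $\binom{p-1}{j}\equiv (-1)^j \pmod{p}$, the entries of row $p-1$ reduce to alternating values of $1$ and $p-1$. Hence for any \emph{even} character $\chi$ (i.e.\ one with $\chi(-1)=1$) every entry contributes $1$ to $T_\chi(p-1)$, giving
\begin{equation*}
    T_\chi(p-1) \;=\; \sum_{j=0}^{p-1}\chi((-1)^j) \;=\; p.
\end{equation*}
Consequently, \emph{any} even nonprincipal character $\chi$ with $|\pc(p)|\leq p$ is automatically not row-regular, since taking $b=p-1$ in Definition \ref{rr} immediately gives $|T_\chi(b)|=p\geq|\pc(p)|$.

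The task is thus reduced to producing a single even nonprincipal character $\chi$ with $|\pc(p)|\leq p$, equivalently $|\pc(p)/p|\leq 1$. The heuristic probabilistic model developed earlier in this section strongly suggests this is easy: for even nonprincipal $\chi$ the predicted expectation of $\pc(p)$ is $3p$ with variance of order $p^2$, so the quantity $\pc(p)/p$ has standard deviation on the order of $1$ and one expects a positive proportion of such characters to have $|\pc(p)/p|<1$. The scatter plot of $\pc(p)/p$ shown above visually confirms that such values do occur.

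The only genuine step, then, is to confirm this with explicit data rather than heuristics: one reads off the table in Section \ref{sec:Appendix} (which records $\pc(p)$ for all $\chi$ and all $p<100$) and selects a specific pair $(p,\chi)$ with $\chi$ even and nonprincipal satisfying $|\pc(p)|\leq p$. Once such a pair is identified, one verifies the failure of row-regularity in one line: $|T_\chi(p-1)|=p\geq|\pc(p)|$. The main (and essentially only) obstacle is thus the bookkeeping of producing an explicit counterexample from the tabulated data; no further theoretical work is needed.
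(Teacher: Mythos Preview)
Your approach is correct and matches the paper's: it too checks $b=p-1$ for an even character, exhibiting $p=37$ and $\chi(2)=e^{20\pi i/36}$, for which computation gives $|\pc(37)|\approx 33.88<37=T_\chi(36)$. Your derivation of $T_\chi(p-1)=p$ from $\binom{p-1}{j}\equiv(-1)^j$ is cleaner than the paper's bare computational assertion of $T_\chi(36)=37$, but note that the Appendix contains no table of $\pc(p)$ values (only the scatter plot and the list of non-row-regular characters), so to complete the proof you must still name the specific pair $(p,\chi)$ and report $\pc(p)$, exactly as the paper does.
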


\begin{proof}
    Let $p=37$ and $\chi(2)=e^{\frac{20\pi i}{36}}$. Then a computer calculation (see section \ref{sec:Appendix}) shows
    \begin{equation*}
        \pc(p)=33e^{\frac{20\pi i}{36}}-3e^{\frac{16\pi i}{36}}-8e^{\frac{12\pi i}{36}}-21e^{\frac{8\pi i}{36}}-18e^{\frac{4 \pi i}{36}}\approx 33.7472651243456 + 2.96112697681136i.
    \end{equation*}
    Whereas
    \begin{equation*}
        T_\chi(36)=37.
    \end{equation*}
\end{proof}

\begin{remark}
    This is also the smallest non-row-regular character. It is also row-dominant.
\end{remark}

Now, we turn our attention to bounding $\pc(p)$, for nonprincipal character $\chi$, we prove an extremely weak (but nontrivial) upper bounds for $|\pc(p)|$ that is an $O(p\sqrt{p})$ improvement on the trivial bound of $p(p+1)/2$. 

\begin{theorem}\label{the:4.3}
    Fix a prime $p$ and nonprincipal character $\chi$. Then
    \begin{equation*}
        \pc(p)\leq \frac{p^2-2p\lfloor \sqrt{p} \rfloor + \sqrt{p} \lfloor \sqrt{p} \rfloor ^ 2 +p+\sqrt{p}\lfloor\sqrt{p}\rfloor + \lfloor \sqrt{p} \rfloor ^ 2 -2\sqrt{p}+\lfloor \sqrt{p} \rfloor}{2}.
    \end{equation*}
\end{theorem}

\begin{proof}
    We begin with a formula for $\pc(p)$:

    \begin{equation*}
        |\pc(p)|= \sum_{n=0}^{p-1}\sum_{m=0}^{p-1} \chi\!\left(\!\binom{m}{n}\!\right).
    \end{equation*}

    Now we apply the triangle inequality and separate this sum in to four parts, which we will analyze separately.

    \begin{equation*}
        |\pc(p)|\leq \left|\sum_{m=0}^{p-1} \chi\!\left(\!\binom{m}{0}\!\right)\right|+\left|\sum_{m=0}^{p-1} \chi\!\left(\!\binom{m}{1}\!\right)\right|+\sum_{n=2}^{\lfloor\sqrt{p}\rfloor}\left|\sum_{m=0}^{p-1} \chi\!\left(\!\binom{m}{n}\!\right)\right|+\sum_{n=\lfloor\sqrt{p}\rfloor+1}^{p-1}\left|\sum_{m=0}^{p-1} \chi\!\left(\!\binom{m}{n}\!\right)\right|.
    \end{equation*}

    For the first sum, $\binom{m}{0}=1$ for all $m$, so the first term becomes $p$. In the second term, we note that $\binom{m}{1}=m$, so the sum is $0$ by orthogonality of Dirichlet characters. In the third sum, we note that $\binom{m}{n}$ is a degree $n$ polynomial with exactly $n$ distinct roots mod $p$, so the conditions of the Weil bounds \cite{Schmidt1976} for character sums of polynomials are met. This gives us $\left|\sum_{m=0}^{p-1} \chi\left(\binom{m}{n}\right)\right|\leq n\sqrt{p}$. Finally, for the final term we know that $\binom{m}{n}=0$ for $m<n$, and since $|\chi(q)|=1$, so by the triangle inequality we have $\left|\sum_{m=0}^{p-1} \chi\left(\binom{m}{n}\right)\right|\leq \sum_{m=0}^{p-1} \left|\left(\binom{m}{n}\right)\right|\leq q-n$. Combining these together, we get

    \begin{align*}
        \pc(p)\leq& p+\sum_{n=2}^{\lfloor \sqrt{p} \rfloor} n\sqrt{p}+\sum_{n=\lfloor \sqrt{p}\rfloor + 1}^{p-1} (p-n) \\
        =& p+\sqrt{p}\frac{\lfloor \sqrt{p}\rfloor(\lfloor \sqrt{p}\rfloor + 1)}{2}-\sqrt{p}+\frac{(p-\lfloor \sqrt{p}\rfloor - 1)(p-\lfloor \sqrt{p} \rfloor)}{2} \\
        =& \frac{p^2-2\lfloor \sqrt{p} \rfloor p + \sqrt{p} \lfloor \sqrt{p} \rfloor ^ 2 +p+\sqrt{p}\lfloor\sqrt{p}\rfloor + \lfloor \sqrt{p} \rfloor ^ 2 -2\sqrt{p}+\lfloor \sqrt{p} \rfloor}{2}
    \end{align*}
\end{proof}

We can simplify the inequality to a form that looks nicer with $x - 1 < \lfloor x \rfloor\leq x$. 
\begin{corollary}
    We have the weaker but nicer looking inequality
    \begin{equation*}
        \pc(p)< \frac{p^2-p\sqrt{p}+5p-\sqrt{p}}{2}.
    \end{equation*}
\end{corollary}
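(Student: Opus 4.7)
The plan is a routine book-keeping calculation: substitute the standard bounds $\sqrt{p}-1 < \lfloor \sqrt{p}\rfloor \leq \sqrt{p}$ into each of the eight terms of the numerator from Theorem \ref{the:4.4}. Writing $L = \lfloor \sqrt{p} \rfloor$ for brevity, that numerator is
$$N = p^2 - 2pL + \sqrt{p}\, L^2 + p + \sqrt{p}\, L + L^2 - 2\sqrt{p} + L.$$

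The only term in which $L$ carries a negative coefficient is $-2pL$; there I would use the lower bound $L > \sqrt{p}-1$ to get $-2pL < -2p\sqrt{p} + 2p$, which is a strict inequality. For every remaining term in which $L$ appears, the coefficient is positive, so I would plug in $L \leq \sqrt{p}$ (and hence $L^2 \leq p$) to obtain $\sqrt{p}\, L^2 \leq p\sqrt{p}$, $\sqrt{p}\, L \leq p$, $L^2 \leq p$, and $L \leq \sqrt{p}$. Summing these eight contributions gives
$$N < p^2 - 2p\sqrt{p} + 2p + p\sqrt{p} + p + p + p - 2\sqrt{p} + \sqrt{p} = p^2 - p\sqrt{p} + 5p - \sqrt{p},$$
and dividing by $2$ yields the stated inequality.

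There is no real obstacle here: strictness is automatic because $p$ prime is never a perfect square, so $\sqrt{p}$ is irrational and the hint $\lfloor \sqrt{p} \rfloor > \sqrt{p} - 1$ is already strict. The only book-keeping hazard is remembering to switch to the lower bound on $L$ precisely at the one term where the coefficient of $L$ is negative; everywhere else the upper bound $L \leq \sqrt{p}$ is what preserves the direction of the inequality.
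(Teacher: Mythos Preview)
Your proof is correct and is essentially identical to the paper's: both start from Theorem~\ref{the:4.4}, apply the lower bound $\lfloor\sqrt{p}\rfloor>\sqrt{p}-1$ to the single negatively-signed term $-2p\lfloor\sqrt{p}\rfloor$, apply the upper bound $\lfloor\sqrt{p}\rfloor\le\sqrt{p}$ to every other occurrence, and simplify. Your remark about $p$ not being a perfect square is harmless but unnecessary, since $\lfloor x\rfloor>x-1$ is already strict for every real $x$.
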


\begin{proof}
    We start with \ref{the:4.3} and use $x-1 < \lfloor x \rfloor\leq x$.
    \begin{align*}
        \pc(p)\leq & \frac{p^2-2p\lfloor \sqrt{p} \rfloor  + \sqrt{p} \lfloor \sqrt{p} \rfloor ^ 2 +p+\sqrt{p}\lfloor\sqrt{p}\rfloor + \lfloor \sqrt{p} \rfloor ^ 2 +\lfloor \sqrt{p} \rfloor}{2} \\
        <& \frac{p^2-2p(\sqrt{p}-1)+\sqrt{p}^3+p+\sqrt{p}^2+\sqrt{p}^2-2\sqrt{p}+\sqrt{p}}{2} \\
        =&\frac{p^2-p\sqrt{p}+5p-\sqrt{p}}{2}.
    \end{align*}
\end{proof}

These bounds are clearly not very strong, as suggested by the diagram on page 10. However, to improve this bound we would need a much better understanding of the behavior of the fundamental domain. 

\section{Conclusion}\label{sec:conc}

We now have the necessary knowledge to prove theorem \ref{the:1.3}. We define the constant $\vartheta$ by
\begin{equation*}
    \vartheta=\max(\{\mathfrak{R}(\log_p(\pc(p):\chi\neq \chi_0))\}\cup\{1\}).
\end{equation*}

\begin{proof}[Proof of theorem \ref{the:1.3}]
   Using lemma \ref{lem:explicitformula} we write

    \begin{equation*}
        A_n(r)=\frac{1}{p-1}\sum_{\chi} \overline{\chi(r)}\pc(n).
    \end{equation*}

    Next, we bring out the $\chi=\chi_0$ term and use theorem \ref{the:3.2} on the row regular terms and theorem \ref{the:3.8} on the non-row-regular terms.

    \begin{equation*}
        A_n(r)=\frac{\phi_{\chi_0}}{p-1}+\frac{1}{p-1}\left(\sum_{\chi \text{ rr}}O(n^{\tc})+\sum_{\chi \text{ nrr}}O(n^{\rc+\varepsilon})\right).
    \end{equation*}

    Where the first sum is over row-regular pairs and the second is over non-row-regular pairs. Since $\rho_\chi<1$, we can select $\varepsilon$ such that $\rho_\chi+\varepsilon<1$ for all non-row-regular $\chi$. This gives
    
    \begin{equation*}
        A_n(r)=\frac{\phi_{\chi_0}}{p-1}+O(n^{\vartheta}).
    \end{equation*}
\end{proof}

For a fixed prime $p$, the formula $A_n(r)=\frac{\phi_p(n)}{p-1}+O(n^{\vartheta})$ is significantly better than simply theorem \ref{the:1.2}, though it requires knowledge of the fundamental domain of that prime (specifically calculating it's $\vartheta$.) Moreover, a given prime also allows us to compute constants that give explicit bounds on $A_n(r)$.  For example, if $\chi$ is the sole nonprinciple character mod $3$, then an exercise in summing the geometric series in theorem \ref{the:3.2} gives
\begin{equation*}
    |\pc(n)|\leq 6.3n^{\log_3(4)}
\end{equation*}
Using this along with \cite{BradWilson1998} yields the following bounds of $A_n(r)$ for $r\not \equiv 0 \mod 3$
\begin{align*}
    |A_n(r)-\frac{\phi_{3}(n)}{2}|\leq& 3.15n^{\log_3(4)} \\
    0.38714n^{\log_3(6)}-3.15n^{\log_3(4)}\leq A_n(r)\leq& 0.5n^{\log_3(6)}+3.15n^{\log_3(4)}.
\end{align*}

On the other hand, for an arbitrary prime, theorem \ref{the:4.3} allows us to obtain
\begin{equation*}
    \vartheta<\log_p\left(\frac{p^2-2\lfloor \sqrt{p} \rfloor p + \sqrt{p} \lfloor \sqrt{p} \rfloor ^ 2 +p+\sqrt{p}\lfloor\sqrt{p}\rfloor + \lfloor \sqrt{p} \rfloor ^ 2 -2\sqrt{p}+\lfloor \sqrt{p} \rfloor}{2}\right).
\end{equation*}

Our first conjecture is inspired by the probabilistic calculations done in section \ref{sec:fd}.

\begin{conjecture}
    Let $A_p=\{\pc(p):\chi(-1)=1,\chi\neq \chi_0\}$ and $B_p=\{\pc(p):\chi(-1)=-1\}$. Let $\mu_{A_p}$ and $\mu_{B_p}$ be the means of $A_p$ and $B_p$, then as $p$ goes to infinity,
    \begin{equation*}
        \mu_{A_p}\sim3p \qquad \mu_{B_p}\sim2p
    \end{equation*}
\end{conjecture}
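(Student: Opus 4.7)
The plan is to reduce this conjecture to conjecture \ref{conj:4.1} via character orthogonality on the subgroup $\{\pm 1\}\subseteq(\mathbb{Z}/p\mathbb{Z})^\times$. Starting from $\pc(p)=\sum_{r=1}^{p-1}\chi(r)A_p(r)$, summing over the even (respectively odd) characters and interchanging the order of summation reduces everything to evaluating $\sum_{\chi(-1)=\pm 1}\chi(r)$. The even characters are precisely those trivial on $\{\pm 1\}$, so orthogonality on the quotient $(\mathbb{Z}/p\mathbb{Z})^\times/\{\pm 1\}$ yields $\sum_{\chi(-1)=1}\chi(r)=\tfrac{p-1}{2}$ when $r\equiv\pm 1\pmod p$ and $0$ otherwise, while $\sum_{\chi(-1)=-1}\chi(r)$ equals $\tfrac{p-1}{2},-\tfrac{p-1}{2},0$ for $r\equiv 1$, $r\equiv -1$, and all other residues.

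These identities collapse the sums of interest to
\begin{align*}
    \sum_{\chi(-1)=1}\pc(p)&=\tfrac{p-1}{2}\bigl(A_p(1)+A_p(-1)\bigr),\\
    \sum_{\chi(-1)=-1}\pc(p)&=\tfrac{p-1}{2}\bigl(A_p(1)-A_p(-1)\bigr).
\end{align*}
Next I would peel off the principal character from the even sum, using $\phi_{\chi_0}(p)=\phi_p(p)=\tfrac{p(p+1)}{2}$ (every entry in the first $p$ rows is nonzero by Lucas), and divide by $|A_p|=\tfrac{p-3}{2}$ and $|B_p|=\tfrac{p-1}{2}$. Invoking conjecture \ref{conj:4.1} to substitute $A_p(1)\sim 3p$ and $A_p(-1)\sim p$ gives
\begin{equation*}
    \mu_{A_p}=\frac{\tfrac{p-1}{2}\bigl(A_p(1)+A_p(-1)\bigr)-\tfrac{p(p+1)}{2}}{(p-3)/2}\sim 3p,\qquad\mu_{B_p}=A_p(1)-A_p(-1)\sim 2p,
\end{equation*}
where for $\mu_{A_p}$ the numerator comes out to $\tfrac{3p^2-5p}{2}+o(p^2)$ and polynomial division by $(p-3)/2$ produces $3p+o(p)$.

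The main obstacle is that conjecture \ref{conj:4.1} is itself unproven, so the above really furnishes a clean conditional reduction: the present conjecture is essentially equivalent to conjecture \ref{conj:4.1}, since the character sums $\sum_{\chi(-1)=\pm 1}\chi(r)$ only detect the $r=\pm 1$ components and those are exactly what conjecture \ref{conj:4.1} controls. An unconditional attack would need to establish the asymptotics $A_p(1)\sim 3p$ and $A_p(-1)\sim p$ directly; the probabilistic heuristic of section \ref{sec:fd} is suggestive but making it rigorous would require genuine equidistribution statements for $\binom{n}{m}\bmod p$ across the fundamental domain, presumably via character sum estimates considerably sharper than the Weil-type bound underlying theorem \ref{the:4.4}.
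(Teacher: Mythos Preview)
The paper does not prove this statement: it is presented as an open conjecture, with only the heuristic probabilistic model of section~\ref{sec:fd} offered as motivation, and the subsequent remark even notes that the conjecture does not obviously follow with probability~$1$ from that model. So there is no proof in the paper to compare against.

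Your argument is nonetheless correct as a \emph{conditional} reduction, and it actually goes further than anything the paper does. The character-sum identities $\sum_{\chi(-1)=1}\chi(r)$ and $\sum_{\chi(-1)=-1}\chi(r)$ are computed correctly via orthogonality on $(\mathbb{Z}/p\mathbb{Z})^\times/\{\pm1\}$, the resulting formulae
\[
\sum_{\chi(-1)=1}\pc(p)=\tfrac{p-1}{2}\bigl(A_p(1)+A_p(-1)\bigr),\qquad
\sum_{\chi(-1)=-1}\pc(p)=\tfrac{p-1}{2}\bigl(A_p(1)-A_p(-1)\bigr)
\]
are valid, and the arithmetic extracting $\mu_{A_p}\sim 3p$ and $\mu_{B_p}\sim 2p$ from conjecture~\ref{conj:4.1} is clean. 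The paper never isolates this reduction; it would be a worthwhile observation to record.

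Two small refinements. First, your claim of ``essential equivalence'' with conjecture~\ref{conj:4.1} is slightly overstated: your argument uses (and, run in reverse, recovers) only the first two bullets, $A_p(1)\sim 3p$ and $A_p(-1)\sim p$; the third bullet about $A_p(r)\sim p/2$ for $r\neq 0,\pm1$ is neither used nor implied. Second, the paper defines $A_p$ and $B_p$ as sets, so strictly speaking one should read them as multisets (or families indexed by $\chi$) for the cardinalities $(p-3)/2$ and $(p-1)/2$ to be correct; you are tacitly doing this, which is surely the intended reading, but it is worth flagging.
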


\begin{remark}
    While this is inspired by the probabilistic calculations done in \ref{sec:fd}, it's not clear that this follows with probability $1$ under those assumptions. One would wish to use the central limit theorem, but that requires the assumption that $\pc(p)$ are independent for distinct $\chi$. However, this is not the case. Indeed, if $\chi$ is an injection from $\mathbb{Z}/p\mathbb{Z}$ to $\mathbb{C}$, then the value of $\pc(p)$ determines the value of $\phi_{\psi}(p)$ for any other character $\psi$.
\end{remark}

Once again inspired by the probabilistic calculations, we suspect that the value of $\pc(p)$ has quite a lot of variance, which would suggest that $|\pc(p)|<p$ quite often, which gives many opportunities for $\chi$ to be row dominant. This heuristic argument along with some numerical evidence suggest the following conjecture.

\begin{conjecture}
    There are infinitely many row-dominant characters.
\end{conjecture}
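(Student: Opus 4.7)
The natural first step is a reduction. Applying Lucas' theorem (specifically the congruence $\binom{p-1}{j} \equiv (-1)^j \pmod{p}$), we compute
\begin{equation*}
    T_\chi(p-1) = \sum_{j=0}^{p-1} \chi(-1)^j.
\end{equation*}
For an even character (one with $\chi(-1) = 1$), every summand equals $1$ and so $|T_\chi(p-1)| = p$. Hence any even nonprincipal character $\chi$ mod $p$ with $|\pc(p)| < p$ is automatically row-dominant via the witness $b = p - 1$. The conjecture therefore reduces to the following question: do there exist infinitely many pairs $(p, \chi)$, with $p$ prime and $\chi$ an even nonprincipal character mod $p$, satisfying $|\pc(p)| < p$?

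I would attack this reduced question by a second-moment method over the $(p-3)/2$ even nonprincipal characters modulo $p$. Expanding $|\pc(p)|^2$ as a double sum over pairs of entries in the fundamental domain and applying the orthogonality identity
\begin{equation*}
    \sum_{\chi \text{ even}} \chi(a)\overline{\chi(b)} = \frac{p-1}{2}\bigl(\mathbbold{1}[a \equiv b] + \mathbbold{1}[a \equiv -b]\bigr),
\end{equation*}
one obtains a combinatorial count: the number of ordered pairs of interior entries whose binomial coefficients are equal, or negatives of one another, modulo $p$. Under the random model of Section \ref{sec:fd} this count is $\Theta(p^3)$, giving a mean of $|\pc(p)|^2$ on the order of $p^2$ and confirming the predicted standard deviation $\Theta(p)$ of $\pc(p)$.

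The main obstacle is the mismatch between the heuristic mean of $\pc(p)$, which is $\sim 3p$ for even characters, and the required threshold $|\pc(p)| < p$. A bare Chebyshev bound on the second moment only forces many characters to satisfy $|\pc(p) - 3p| = O(p)$; it does not push any into the left tail below $p$. Bridging this gap will require either (a) a fourth-moment calculation with enough cancellation to yield subgaussian concentration and thus a polynomial left tail, or (b) a symmetry or equidistribution argument showing $\pc(p)$ spreads roughly evenly around its mean in the complex plane, so that a positive proportion of characters falls within a disc of radius $p$ about the origin.

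A complementary approach, suggested by the $p = 37$ example, is an explicit construction. For a fixed small integer $n$ and primes $p \equiv 1 \pmod{n}$, let $\chi$ be of exact order $n$; then $\pc(p)$ is a $\mathbb{Z}$-linear combination of $n$-th roots of unity whose coefficients are governed by the distribution of $n$-th-power residue classes among binomial coefficients in the fundamental domain. Deriving $|\pc(p)| < p$ for infinitely many such $p$ then reduces to an equidistribution statement for these coefficients, plausibly accessible by character-sum or sieve methods at least for $n$ small. I expect this explicit-construction route to be the more tractable of the two in the short term, while the second-moment approach, if it can be completed, would yield the cleaner and quantitatively stronger result.
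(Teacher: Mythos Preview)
The paper does not prove this statement; it is stated as a \emph{conjecture} and left open. The only supporting argument the paper offers is precisely the heuristic you reconstruct in your first paragraph: for an even character one has $|T_\chi(p-1)|=p$, and the probabilistic model of Section~\ref{sec:fd} suggests that $|\pc(p)|<p$ should occur for a positive fraction of characters, so row-dominant characters should be plentiful. Your reduction to ``infinitely many even nonprincipal $\chi$ with $|\pc(p)|<p$'' is exactly the paper's informal reasoning, made explicit.

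Everything past that reduction---the second-moment computation over even characters, the observation that Chebyshev alone cannot reach the left tail below $p$, the suggestion of a fourth-moment or equidistribution argument, and the alternative explicit-construction route via characters of fixed small order---goes strictly beyond what the paper attempts. You are candid that none of these closes the gap, and that assessment is correct: the mean $\sim 3p$ versus threshold $p$ mismatch is the genuine obstruction, and neither a naive moment bound nor the paper's random model yields an unconditional statement. So there is no error to flag in your proposal; rather, you have correctly identified the same heuristic the paper uses, articulated why it falls short of a proof, and sketched plausible lines of attack that the paper does not pursue. The statement remains open in the paper as well.
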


We now discuss a direction for future research, one would hope to obtain an improvement on the error term $O(n^\vartheta)$, however this cannot be directly improved, as there is a term in \ref{lem:explicitformula} that grows like $O(n^\vartheta)$, so instead we would need to bound the value of $\pc(n)$ better than just $O(n^{\tc})$. To this end, we define the zeta functions
\begin{equation*}
    Z_\chi(s)=\sum_{n=1}^\infty \frac{T_\chi(n)}{n^s}
\end{equation*}
With the hope that this function would allow us to obtain explicit formulae for $\pc(n)$.  \cite{zetafunctions} considered $Z_{\chi_0}(s)$  (among other similar functions) and obtained an explicit formula for $\psi_{\chi_0}(x)$. Theorem \ref{the:3.4} suggests we may be able to generalize their techniques to arbitrary row-regular characters $\chi$, and obtain an explicit formula for $\psi_\chi(n)$, which would yield and explicit formula for $\phi_\chi(n)$. This would give us a formula for $A_n(r)$ with an $O(n)$ error, and an exact formula for $A_n(r)$ in the case where the prime has no non-row-regular characters.

\section*{Acknowledgements}

I would like to thank Professor All for his feedback.

\printbibliography

\section{Appendix}
\label{sec:Appendix}

All code is written in python with the Sagemath library. They were run on my personal laptop (Lenovo ThinkPad P1 Gen 4). The numerical calculations should not be seen as significant numerical research, and are instead simple programs to create small visualizations and compute small counterexamples.

Code to plot $\pc(p)/p$ for $p<100$:

\begin{lstlisting}[language=Python]
def tri(p):
    A = []
    for i in range(p):
        A += [[]]
        for j in range(p):
            if i == 0 and j == 0:
                A[i] += [1]
            elif i == 0:
                A[i] += [0]
            elif j == 0:
                A[i] += [1]
            else:
                A[i] += [(A[i - 1][j - 1] + A[i - 1][j]) % p]
                
    return A

D = []
E = []
for n in range(1, 30):
    p = nth_prime(n)
    B = tri(p)
    for chi in DirichletGroup(p):
        C = []
        for i in range(p):
            C += [[]]
            for j in range(p):
                C[i] += [chi(B[i][j])]
        sum = 0
        for i in range(p):
            for j in range(p):
                sum += C[i][j]
        if (sum != p * (p + 1) / 2):
            if (chi(p - 1) == -1):
                D += [sum / p]
            else:
                E += [sum / p]
                
G = list_plot(D,size=5,color='red')
P = list_plot(E,size=5,color='blue')
P + G
\end{lstlisting}

Code to check for non-row-regular characters $\chi$.
\begin{lstlisting}[language=python]
def tri(p):
    A = []
    for i in range(p):
        A += [[]]
        for j in range(p):
            if i == 0 and j == 0:
                A[i] += [1]
            elif i == 0:
                A[i] += [0]
            elif j == 0:
                A[i] += [1]
            else:
                A[i] += [(A[i - 1][j - 1] + A[i - 1][j]) % p]
                
    return A

for n in range(1,50):
    p = nth_prime(n)
    print(p)
    B = tri(p)
    row_regular = true
    for chi in DirichletGroup(p):
        C = []
        for i in range(p):
            C += [[]]
            for j in range(p):
                C[i] += [chi(B[i][j])]
        sum = 0
        Tsum = []
        for i in range(p):
            Tsum += [0]
            for j in range(p):
                sum += C[i][j]
                Tsum[i] += C[i][j]
                
        for i in range(p):
            if abs(Tsum[i]) >= abs(sum):
                print(chi,i)
                row_regular = false
\end{lstlisting}

A search of the first 50 primes yields the following table of non-row-regular characters. We note that if $\chi$ is not row regular, $\overline{\chi}$ is also not row-regular, so we only include one character in each conjugacy pair.
    \begin{center}
        \begin{tabular}{|c|c|}
            \hline
            $p$ & $\chi$ \\
            \hline
            \hline
            $37$ & $\chi(2)=e^{\frac{20\pi i}{36}}$ \\
            \hline
            $47$ & $\chi(5)=e^{\frac{36\pi i}{46}}$ \\
            \hline
            $97$ & $\chi(5)=e^{\frac{44\pi i}{96}}$ \\
            \hline
            $97$ & $\chi(5)=e^{\frac{92\pi i}{96}}$ \\
            \hline
            $101$ & $\chi(2)=e^{\frac{56\pi i}{100}}$ \\
            \hline
            $109$ & $\chi(6)=e^{\frac{96\pi i}{108}}$ \\
            \hline
            $113$ & $\chi(3)=e^{\frac{16\pi i}{112}}$ \\
            \hline
            $131$ & $\chi(2)=e^{\frac{48 \pi i}{130}}$ \\
            \hline
            $137$ & $\chi(3)=e^{\frac{24\pi i}{136}}$ \\
            \hline
            $139$ & $\chi(2)=e^{\frac{52\pi i}{138}}$ \\
            \hline
            $139$ & $\chi(2)=e^{\frac{64\pi i}{138}}$ \\
            \hline
        \end{tabular}
        \begin{tabular}{|c|c|}
            \hline
            $p$ & $\chi$ \\
            \hline
            \hline
            $149$ & $\chi(2)=e^{\frac{52\pi i}{148}}$ \\
            \hline
            $149$ & $\chi(2)=e^{\frac{120\pi i}{148}}$ \\
            \hline
            $149$ & $\chi(2)=e^{\frac{136\pi i}{148}}$ \\
            \hline
            $151$ & $\chi(6)=e^{\frac{24\pi i}{150}}$ \\
            \hline
            $157$ & $\chi(5)=e^{\frac{60\pi i}{156}}$ \\
            \hline
            $157$ & $\chi(5)=e^{\frac{64\pi i}{156}}$ \\
            \hline
            $163$ & $\chi(2)=e^{\frac{52\pi i}{162}}$ \\
            \hline
            $173$ & $\chi(2)=e^{\frac{152\pi i}{172}}$ \\
            \hline
            $199$ & $\chi(3)=e^{\frac{116\pi i}{198}}$ \\
            \hline
            $223$ & $\chi(3)=e^{\frac{56\pi i}{222}}$ \\
            \hline
            $223$ & $\chi(3)=e^{\frac{76\pi i}{222}}$ \\
            \hline
        \end{tabular}
    \end{center}

\end{document}